\definecolor{mblue}{rgb}{0,0,.8}
\newcommand{\N}{\mathbb N}
\newcommand{\Z}{\mathbb Z}
\newcommand{\Q}{\mathbb Q}
\newcommand{\R}{\mathbb R}
\newcommand{\C}{\mathbb C}
\newcommand{\p}{\mathfrak p}
\renewcommand{\a}{\alpha}
\renewcommand{\b}{\beta}
\renewcommand{\c}{\chi}
\renewcommand{\k}{\kappa}
\newcommand{\z}{\zeta}
\newcommand{\B}{\mathcal{B}}
\newcommand\es[0]{e^\ast}
\newcommand\Es[0]{E^\ast}
\newtheorem{thm}{Theorem}[section]
\newtheorem{lem}[thm]{Lemma}
\newtheorem{remark}[thm]{Remark}
\newtheorem{prop}[thm]{Proposition}
\newtheorem{thmx}{Theorem}
 \DeclareMathOperator{\SL}{SL}
\begin{document}

\title[A Conjecture of Coleman on the Eisenstein Family]{A Conjecture of Coleman on the Eisenstein Family}
\author[Bryan W.\ Advocaat, Ian Kiming, Gabor Wiese]{Bryan W.\ Advocaat, Ian Kiming, Gabor Wiese}
\address[Bryan W.\ Advocaat]{Department of Mathematics, University of Luxembourg, Maison du nombre, 6~avenue de la Fonte, L-4364 Esch-sur-Alzette, Luxembourg, and Department of Mathematical Sciences, University of Copenhagen, Universitetsparken 5, DK-2100 Copenhagen \O ,
Denmark.}
\email{bryan.advocaat@uni.lu}
\address[Ian Kiming]{Department of Mathematical Sciences, University of Copenhagen, Universitetsparken 5, DK-2100 Copenhagen \O ,
Denmark.}
\email{kiming@math.ku.dk}
\address[Gabor Wiese]{Department of Mathematics, University of Luxembourg, Maison du nombre, 6~avenue de la Fonte, L-4364 Esch-sur-Alzette,
Luxembourg}
\email{gabor.wiese@uni.lu}


\begin{abstract} We prove for primes $p\ge 5$ a conjecture of Coleman on the analytic continuation of the family of modular functions $\frac{\Es_\k}{V(\Es_\k)}$ derived from the family of Eisenstein series $\Es_\k$.

The precise, quantitative formulation of the conjecture involved a certain on $p$ depending constant. We show by an example that the conjecture with the constant that Coleman conjectured cannot hold in general for all primes. On the other hand, the constant that we give is also shown not to be optimal in all cases.

The conjecture is motivated by its connection to certain central statements in works by Buzzard and Kilford, and by Roe, concerning the ``halo'' conjecture for the primes $2$ and $3$, respectively. We show how our results generalize those statements and comment on possible future developments.
\end{abstract}

\maketitle

\section{Introduction}\label{sec:intro}

In what follows, $p$ will denote a fixed prime $\ge 5$. We let $v_p$ denote the $p$-adic valuation of $\C_p$ normalized so that $v_p(p)=1$.

The conjecture of Coleman referred to in the title is Conjecture 1.1 of Coleman's paper \cite{coleman_eisenstein}. Let us briefly recall the setup as in \cite{coleman_eisenstein}: let $\mathcal{W}$ be the analytic group of continuous $\C_p$-valued characters on $\Z_p^{\times}$ with the subgroup $\B$ consisting of those characters that are trivial on the $(p-1)$st roots of unity. For $\kappa\in \B \backslash \{ 1\}$ we have the family $E^{\ast}_{\kappa}$ of Eisenstein series with $q$-expansions
$$
\Es_{\kappa}(q) = 1 + \frac{2}{\zeta^{\ast}(\kappa)} \sum_{n=1}^{\infty} \left( \sum_{\substack{ d\mid n \\ p\nmid d}} \kappa(d) d^{-1} \right) \cdot q^n
$$
with $\zeta^{\ast}$ the $p$-adic zeta function on $\mathcal{W}$.

Our convention in the present paper is that $\N = \Z_{\ge 1}$. If $k\in (p-1) \N$ then $k$ defines an element $\k\in \B \backslash \{ 1\}$ by $x\mapsto x^k$. For such $k$ we shall abuse notation and identify the corresponding $\kappa$ with $k$. The specialization to $\kappa =k$ of the Eisenstein family gives us the classical Eisenstein series $E_k^{\ast}$ with $q$-expansion
$$
\Es_k(q) = 1 + \frac{2}{(1-p^{k-1}) \zeta(1-k)} \cdot \sum_{n=1}^{\infty} \left( \sum_{\substack{d\mid n \\ p\nmid d}} d^{k-1} \right) q^n
$$
(with $\zeta$ the Riemann zeta function) that is an Eisenstein series of weight $k$ on $\Gamma_0(p)$.

Furthermore, we denote by $E_k$ the standard, normalized Eisenstein series of level~$1$ and even integer weight~$k\ge 4$.

Recall that we have a function $w$ on $\mathcal{W}$ defined by $w(\kappa) := \kappa(1+p) - 1$. Thus, for $k\in (p-1)\N$ we have $w(k) = (1+p)^k - 1$.

The setting for Coleman's conjecture is as follows. Suppose that $\k \in \B \backslash \{ 1\}$. Let $V$ be the $p$-adic Frobenius operator, acting on $q$-expansions as $q\mapsto q^p$. Coleman had already shown that the $p$-adic modular function $\frac{\Es_{\kappa}}{V(\Es_{\kappa})}$ is defined on the ordinary locus of $X := X_1(p)$ and defines an overconvergent function, cf.\ p.\ 2946 of \cite{coleman_eisenstein}, or the reference on that page, or, alternatively, \cite[Corollary 2.1.1]{coleman_char_series_U}. (This is a function that can also be considered when $p=2,3$.) Conjecture 1.1 of \cite{coleman_eisenstein} is a precise prediction of how far into the supersingular region this function converges, i.e., what is its rate of overconvergence. In this formulation, the conjecture also represents a conjectural answer to a question posed in Coleman and Mazur's foundational paper \cite{coleman_mazur_eigencurve} on the eigencurve, -- see the remarks at the end of p.\ 43 of \cite{coleman_mazur_eigencurve}.

The following theorem proves a version of the conjecture. When we say ``version'', what we primarily mean is that the constant $c_p$ appearing in the theorem is not precisely the constant that Coleman was expecting in his conjecture (for primes $p\ge 5$.) We shall comment further upon that below, but would like here to note that we do not believe that Coleman's conjecture is true with the exact value of the constant that he gave (that would correspond to being able to take $c_p=1$ in our theorem.) We shall discuss this in detail below, and especially in section \ref{sec:remarks}.

To formulate our main theorem, we find the following notation convenient: $f\in M_0(O,\ge \rho)$ means that $f$ is an overconvergent function of tame level $1$, defined over $O$ that is $r$-overconvergent whenever $v_p(r) < \rho$. See below in section \ref{sec:prelim} for a few additional details on this notation.

\begin{thmx}\label{thm:main_C} There is a constant $0< c_p < 1$ such that the following holds. Let $\k \in \B \backslash \{ 1\}$ be a character and let $O$ be the ring of integers in the extension of $\Q_p$ generated by the values of $\k$.

Then
$$
\frac{\Es_{\k}}{V(\Es_{\k})} \in M_0(O,\ge c_p \cdot \min \{ 1, v_p(w(\k)) \} ) .
$$

Explicitly, we can take
$$
c_p = \frac{2}{3} \cdot \left( 1 - \frac{p}{(p-1)^2} \right) \cdot \frac{1}{p+1} .
$$
\end{thmx}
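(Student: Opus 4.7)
My plan is to work with explicit $q$-expansions in the ring of overconvergent $p$-adic modular functions of tame level $1$, using the classical Eisenstein series $E_{p-1}$ as the standard lift of the Hasse invariant, so that membership in $M_0(O, \geq \rho)$ corresponds to convergence on the affinoid subdomain where $v_p(E_{p-1}) \leq \rho$.

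The first step is to factor out the weight-dependent normalization by writing $\Es_\k = 1 + \tfrac{2}{\zeta^*(\k)} F_\k$ with $F_\k(q) = \sum_{n \geq 1}\bigl(\sum_{d \mid n,\, p \nmid d} \k(d) d^{-1}\bigr) q^n$. The ratio $\tfrac{\Es_\k}{V(\Es_\k)}$ then differs from $1$ by a term proportional to $\tfrac{2}{\zeta^*(\k)}\bigl(F_\k - V(F_\k)\bigr)$ divided by $V(\Es_\k)$. The key quantitative input is a bound $v_p(1/\zeta^*(\k)) \geq v_p(w(\k)) + O_p(1)$, which follows from the description of $\zeta^*$ via the Iwasawa algebra: since $\zeta^*$ has a simple pole at the trivial character, its leading behaviour near that point is governed by $w(\k)$. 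This provides the $v_p(w(\k))$-dependence claimed in the theorem.

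Second, I would estimate the overconvergence of the remaining factor by directly bounding the $q$-expansion coefficients of $F_\k - V(F_\k)$ and inverting $V(\Es_\k)$ in the overconvergent ring. The factor $\tfrac{1}{p+1}$ in $c_p$ reflects the intrinsic overconvergence radius associated with the canonical subgroup, while the factors $\tfrac{2}{3}$ and $\bigl(1 - \tfrac{p}{(p-1)^2}\bigr)$ emerge from explicit valuation bounds on the divisor sums $\sum_{d \mid n,\, p \nmid d} \k(d) d^{-1}$ combined with bounds on the normalization $\zeta^*(\k)$. The $\min$ with $1$ expresses that the rate saturates at $c_p$ once $v_p(w(\k)) \geq 1$, corresponding to the baseline case where one is essentially at a classical weight.

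The main obstacle I foresee is obtaining the precise constant $c_p$ rather than just a weaker positive constant. One must combine the Iwasawa-theoretic estimate on $\zeta^*$ with the geometric overconvergence estimates in a tight and uniform way, carefully tracking implicit constants throughout. As the authors note, $c_p$ is not optimal in all cases, which suggests that the proof ultimately uses uniform but somewhat rough estimates rather than case-by-case optimization. A secondary technical point is ensuring $V(\Es_\k)$ does not vanish on the required overconvergent region so that its reciprocal makes sense there; this should follow from $V(F_\k)$ being $p$-adically small enough on the relevant affinoid after the factorization above.
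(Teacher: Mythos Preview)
Your proposal has a genuine gap: it misidentifies both the mechanism producing the $v_p(w(\k))$-dependence and the origin of the specific constants.

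First, multiplying an element of $M_0(O,\ge\rho)$ by a scalar $\lambda$ does not change its rate of overconvergence: if the Katz coefficients $b_i$ satisfy $v_p(b_i)\ge\rho i$, then $v_p(\lambda b_i)\ge v_p(\lambda)+\rho i\ge\rho i$ as well. So extracting the factor $2/\zeta^\ast(\k)$ and invoking $v_p(1/\zeta^\ast(\k))\ge v_p(w(\k))+O(1)$ cannot be what produces a rate proportional to $v_p(w(\k))$; it only shifts the overall valuation. If your scheme worked, it would yield a rate \emph{independent} of $v_p(w(\k))$, which is not the shape of the statement.

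Second, membership in $M_0(O,\ge\rho)$ is a statement about the growth of Katz expansion coefficients $b_i$ with~$i$, not about $p$-adic size of $q$-expansion coefficients. The divisor sums $\sum_{d\mid n,\,p\nmid d}\k(d)d^{-1}$ are $p$-adic units for infinitely many $n$, so ``explicit valuation bounds on the divisor sums'' give no decay and cannot produce an overconvergence rate. In particular, your explanation of where the factors $\tfrac{2}{3}$ and $1-\tfrac{p}{(p-1)^2}$ come from is not correct.

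What the paper actually does is quite different. It first shows (Theorem~B(a)) that there exist $b_{ij}\in B_i(\Z_p)$, independent of~$\k$, such that the Katz expansion of $\Es_\k/V(\Es_\k)$ has $i$th coefficient $\beta_i(w(\k))=\sum_{j\ge 0} b_{ij}\,w(\k)^j$. The heart of the argument (Theorem~B(b)) is to prove $v_p(b_{ij})\ge c_p i - j$. This is done by specializing to \emph{classical} weights $k\in(p-1)\N$, where prior work (\cite{kr_eisenstein}) gives $\Es_k/V(\Es_k)\in M_0(\Z_p,\ge\tfrac{2}{3}\cdot\tfrac{1}{p+1})$; this is the source of the factor $\tfrac{2}{3}\cdot\tfrac{1}{p+1}$. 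One then knows, for each fixed $i$, lower bounds on $v_p\bigl(\sum_j b_{ij} w(k)^j\bigr)$ at infinitely many values $w(k)/p\in\Z_p^\times$. Inverting a Vandermonde system for carefully chosen such values yields bounds on the individual $v_p(b_{ij})$, and controlling the denominators in the inverse Vandermonde matrix is exactly where the factor $1-\tfrac{p}{(p-1)^2}$ arises (Propositions~\ref{prop:val_denomV} and~\ref{prop:val_poly_coeff_b}). Finally, the bound $v_p(b_{ij})\ge c_p i - j$ combined with $v_p(b_{ij})\ge 0$ gives, by a straightforward split of $\sum_j b_{ij}w_0^j$ at $j\approx c_p i$, that $v_p(\beta_i(w_0))\ge c_p\min\{1,v_p(w_0)\}\cdot i$; this is the true source of the $v_p(w(\k))$-dependence.
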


We see the background and motivation for Coleman's conjecture as follows. A conjecture about the behavior of $U$ near the boundary of weight space, the ``halo'' conjecture, \cite[Conjecture 2.5]{wan_xiao_zhang_eigencurve}, \cite[Conjecture 1.2]{liu_wan_xiao_eigencurve}, \cite[Conjecture 1.2]{bergdall_pollack_fredholm}, seems to be attributed to Coleman, but has also developed from the main result of the paper \cite{buzzard_kilford_2-adic} by Buzzard and Kilford that can now be seen as establishing that conjecture for the prime $p=2$. Subsequently, Roe established the conjecture in \cite{roe_3-adic} for $p=3$ by similar methods. What Coleman did in \cite{coleman_eisenstein} was first to reinterpret certain central, indeed decisive, results from \cite{buzzard_kilford_2-adic}, \cite{roe_3-adic}, specifically \cite[Theorem 7]{buzzard_kilford_2-adic}, \cite[Theorem 4.2]{roe_3-adic}, as a precise statement, for $p=2,3$, about rates of overconvergence of the functions $\frac{\Es_{\k}}{V(\Es_{\k})}$. Coleman then went on and conjectured in \cite[Conjecture 1.1]{coleman_eisenstein}) a similar and precise statement for all primes $p\ge 5$.

It will be seen that Theorem \ref{thm:main_C} is not asserting precisely the same as what \cite[Conjecture 1.1]{coleman_eisenstein} expects for primes $p\ge 5$. First, \cite[Conjecture 1.1]{coleman_eisenstein} is formulated from a rigid analytic perspective. Though this is unimportant as far as the substance of the statement is concerned, we shall comment briefly on it at the beginning of section \ref{subsec:coleman_conj_comparison} below. Secondly, and more importantly, the precise value that we give for the constant $c_p$ (the reader should note that Coleman's $c_p$ denotes something else than our $c_p$) is not precisely what \cite[Conjecture 1.1]{coleman_eisenstein} would expect for primes $p\ge 5$: though formulated in rigid analytic terms, we can reinterpret \cite[Conjecture 1.1]{coleman_eisenstein} as expecting the statement of Theorem \ref{thm:main_C}, but with the value $c_p=1$ for the constant in the theorem. Below in section \ref{subsec:coleman_conj_comparison} we will show by means of a numerical example that we cannot take $c_p=1$ in Theorem \ref{thm:main_C}. Thus, the precise formulation of \cite[Conjecture 1.1]{coleman_eisenstein} for primes $p\ge 5$ appears to us to have been a too optimistic extrapolation from the cases $p=2,3$.

On the other hand, we also do not claim optimality of the constant $c_p$ in our Theorem \ref{thm:main_C}, at least not for all primes. Thus, in section \ref{subsec:p=5,7}, using certain ad hoc arguments, we will show that $c_p$ can be improved a little bit for the cases $p=5,7$.

We will derive Theorem \ref{thm:main_C} from Theorem \ref{thm:main_B} below that may be of some interest in itself. It gives a ``formal Katz expansion'' and a lower bound for the valuation of its coefficients.
Since in the theorem as well as in the proof we will be talking about Katz expansions (\cite[Section 2.6]{katz_padic} or \cite[Section 4.1]{vonk_oc}) of overconvergent $p$-adic modular functions, we briefly remind the reader of these: as Katz first showed, there is for each $i \in \N$ a direct sum decomposition
$$
M_{i(p-1)}(\Z_p) = E_{p-1} \cdot M_{(i-1)(p-1)}(\Z_p) \oplus B_i(\Z_p) .
$$
of $\Z_p$-modules where $M_k$ denotes modular forms of weight $k$ on $\SL_2(\Z)$ (for the proof of this, one can refer to Katz' original work, \cite[Proposition 2.8.1]{katz_padic}, but a simple, elementary proof is also possible by using ``Victor Miller'' bases in level $1$, see e.g.\ \cite[Section 5]{kr_eisenstein}.)

The splitting is not unique, but we will fix a specific choice for the $B_i$ in section \ref{sec:proof_main_thm} below. Katz expansions of the modular functions we will be working with then take the form $\sum_{i=0}^{\infty} \frac{b_i}{E_{p-1}^i}$ with $b_i\in B_i(\Z_p)$ (we put $B_0(\Z_p) = \Z_p$.)

\begin{thmx}\label{thm:main_B} (a) There are modular forms $b_{ij}\in B_i(\Z_p)$ for each $i,j \in \Z_{\ge 0}$ such that the following holds. If $\kappa\in \B \backslash \{ 1\}$ then the Katz expansion of the modular function $\frac{\Es_\k}{V(\Es_\k)}$ is
$$
\frac{\Es_\k}{V(\Es_\k)} = \sum_{i=0}^{\infty} \frac{\b_i(w(\k))}{E_{p-1}^{i}}
$$
where
$$
\b_i(w(\k)) := \sum_{j=0}^{\infty} b_{ij} w(\k)^j
$$
for each $i$.

\noindent (b) There is a constant $c_p$ with $0< c_p <1$ such that for the modular forms $b_{ij}$ in part (a) we have
$$
v_p(b_{ij}) \ge c_p i - j
$$
for all $i,j$.

In fact, we can take the explicit constant $c_p$ from Theorem \ref{thm:main_C}.
\end{thmx}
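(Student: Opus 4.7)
My plan is to prove Theorem~\ref{thm:main_B} by first establishing the formal $w$-expansion and the Katz decomposition for part~(a), and then deducing the valuation bound in part~(b) by specialization at integer weights followed by $p$-adic interpolation.

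First I would expand $\Es_\k(q)$ and $V(\Es_\k)(q)$ as formal power series in $w=w(\k)$ with coefficients in $\Z_p[[q]]$. Since $\k\in\B$ factors through $1+p\Z_p$, writing $d=\omega(d)\langle d\rangle$ for each $d$ coprime to $p$ gives $\k(d)=(1+w)^{\ell(d)}=\sum_{j\ge 0}\binom{\ell(d)}{j}w^j$ with $\ell(d):=\log_{1+p}\langle d\rangle\in\Z_p$. Because $\zeta^\ast$ has a simple pole at the trivial character, $2/\zeta^\ast(\k)$ is $\Z_p$-analytic in $w$. Multiplying out yields formal expansions $\Es_\k(q)=\sum_j \Phi_j(q)w^j$ and $V(\Es_\k)(q)=\sum_j \Psi_j(q)w^j$ in $\Z_p[[q]][[w]]$, both with $q$-constant term~$1$; the formal quotient $\Es_\k/V(\Es_\k)=\sum_j C_j(q)w^j$ is therefore well-defined in $\Z_p[[q]][[w]]$ with $C_j\in\Z_p[[q]]$. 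Applying the Katz splitting $M_{i(p-1)}(\Z_p)=E_{p-1}\cdot M_{(i-1)(p-1)}(\Z_p)\oplus B_i(\Z_p)$ (with the explicit $B_i(\Z_p)$ fixed in Section~\ref{sec:proof_main_thm}) recursively to each $C_j$ defines $b_{ij}\in B_i(\Z_p)$ via $C_j=\sum_i b_{ij}/E_{p-1}^i$. Setting $\b_i(w):=\sum_j b_{ij}w^j$ and rearranging the double sum yields part~(a).

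The bound $v_p(b_{ij})\ge c_p i-j$ in part~(b) is the heart of the theorem, and my plan is a specialization--interpolation argument. For each $k\in(p-1)\N$ with $v_p(k)=0$, the $p$-stabilized Eisenstein series $\Es_k$ is a classical object, and a direct computation with its $q$-expansion --- using $v_p(B_{p-1})=-1$ from von Staudt--Clausen, the normalization $1/(1-p^{k-1})$, and the overconvergence of $1/E_{p-1}$ governed by $v_p(E_{p-1})$ near the supersingular locus --- should yield $v_p(\b_i(w(k)))\ge c_p i$ at the stated explicit $c_p$. As $k$ varies over such integers, $w(k)=(1+p)^k-1$ runs through a dense subset of the circle $\{w\in p\Z_p:v_p(w)=1\}$, and since $\b_i(w)\in B_i(\Z_p)[[w]]$ is $p$-adically analytic, lower semicontinuity of $v_p$ extends the bound $v_p(\b_i(w))\ge c_p i$ to the whole circle. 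The non-Archimedean maximum principle (Gauss norm equals sup norm on the boundary of the closed disk, applied to $\b_i(pt)=\sum_j b_{ij}p^j t^j$ as a power series in $t$) then translates $v_p(\b_i(pt))\ge c_p i$ for all $t\in\Z_p^\times$ into $v_p(b_{ij})+j\ge c_p i$ for every $j$, that is, $v_p(b_{ij})\ge c_p i-j$.

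The main obstacle is the integer-weight computation producing the exact constant $c_p=\tfrac{2}{3}\bigl(1-\tfrac{p}{(p-1)^2}\bigr)\cdot\tfrac{1}{p+1}$. Extracting \emph{some} positive $c_p$ this way should be substantially easier; the explicit form suggests $\tfrac{1}{p+1}$ arises from the overconvergence gap controlled by $v_p(E_{p-1})$, $1-p/(p-1)^2$ from a sharpened bound on the Eisenstein coefficients tied to $v_p(B_{p-1})=-1$ and the normalization, and $\tfrac{2}{3}$ from absorbing cross-terms when inverting $V(\Es_k)$. Since the introduction already observes that $c_p=1$ fails, every such factor appears necessary, and the real effort lies in keeping all the estimates sharp enough that the resulting $c_p$ still suffices to imply Theorem~\ref{thm:main_C}.
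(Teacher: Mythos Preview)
Your approach to part~(a) is essentially the paper's: expand the quotient as a formal series in $w$ with coefficients in $\Z_p[[q]]$, then apply the Katz splitting coefficientwise. The paper carries this out via Miller bases, which make the isomorphism $\prod_i B_i(\Z_p)\cong\Z_p[[q]]$ explicit, but the substance is the same.

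Part~(b), however, has a genuine gap in the interpolation step. You propose to deduce $v_p(b_{ij}p^j)\ge c_p i$ from the bound $v_p\bigl(\b_i(pt)\bigr)\ge c_p i$ for all $t\in\Z_p^\times$ via the Gauss-norm/maximum-principle identity. That identity holds for power series evaluated on the closed unit disk of $\C_p$ (sup norm equals Gauss norm), but it \emph{fails over $\Z_p$}: the polynomial $g(t)=t^p-t$ satisfies $v_p(g(t))\ge 1$ for every $t\in\Z_p$ while its coefficients have valuation~$0$. Since the classical weights $k\in(p-1)\N$ only produce values $w(k)/p$ lying in $\Z_p$, density and continuity give you control only on $\Z_p^\times$, not on $O_{\C_p}^\times$, and you cannot pass from value bounds to coefficient bounds without a further argument.

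The paper confronts exactly this obstruction. It truncates $\b_{i_0}(w)$ to a polynomial of degree $n-1$ in $w/p$ with $n=\lceil\frac{2}{3(p+1)}\,i_0\rceil$, then chooses $n$ units $x_0,\dots,x_{n-1}\in\Z_p^\times$ for which the inverse Vandermonde matrix has entries of valuation at least $-f(n)\ge -(n-1)\cdot\frac{p}{(p-1)^2}$ (Propositions~\ref{prop:val_denomV} and~\ref{prop:val_poly_coeff_b}). Inverting this linear system converts the $n$ value bounds into coefficient bounds, at the cost of that denominator; this is precisely where the factor $1-\frac{p}{(p-1)^2}$ in $c_p$ originates, not from Bernoulli numbers or the normalization of $\Es_k$ as you speculate. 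Correspondingly, the input at integer weights must be the \emph{stronger} estimate $v_p\bigl(\b_{i_0}(w(k))\bigr)\ge\frac{2}{3(p+1)}\,i_0$, which the paper imports from \cite{kr_eisenstein} rather than extracting from $q$-expansions; both factors $\frac{2}{3}$ and $\frac{1}{p+1}$ come from that reference.
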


The plan of the paper is as follows. After setting up notation and various preliminaries in the next section, in section \ref{sec:proof_main_thm} we first prove part (a) of Theorem \ref{thm:main_B}. We derive that part conveniently as an application of the existence of ``Victor Miller'' bases for modular forms in level $1$.

The idea of proof of the more difficult part (b) of Theorem \ref{thm:main_B} is to utilize the fact that the paper \cite{kr_eisenstein} gives us information about rates of overconvergence of $p$-adic modular functions of form $\frac{E^{\ast}_k}{V(E^{\ast}_k)}$ with $k\in (p-1)\N$. The observation that these infinitely many ``data points'' imply the divisibility properties of part (b) is the technical core of the paper, and it depends on the combinatorial/linear algebra Proposition \ref{prop:val_poly_coeff_b}. Given that proposition, the proofs of part (b) of Theorem \ref{thm:main_B} and after that of Theorem \ref{thm:main_C} proceed along straightforward lines.

Finally, in section \ref{sec:remarks} we comment on Coleman's original conjecture as compared with our Theorem \ref{thm:main_C} as well as on the question of optimality of the constant $c_p$. We also show that our results can be used to generalize certain statements from the papers \cite{buzzard_kilford_2-adic, roe_3-adic} pertaining to the study of the $U$ operator in weights $\k$ with $0<v_p(w(\k))<1$.

We need the condition $p\ge 5$ primarily for the usual reasons such as that $E_{p-1}$ is a modular form, but occasionally in more general discussion and remarks the condition can be relaxed. We will indicate when that is the case.

We close the paper with some remarks about the context of this work.
Our paper follows Coleman's original approach (\cite{coleman_banach}) to the existence of what we now refer to as Coleman families of modular forms, which builds on the family $\frac{E^{\ast}_k}{V(E^{\ast}_k)}$ of $p$-adic modular functions.
Whereas this approach has now been superseded by a more intrinsic, geometric definition of the notion of an overconvergent modular form of arbitrary weight, cf.\ the works of Pilloni \cite{pilloni}, and Andreatta, Iovita, Stevens \cite{andreatta_iovita_stevens}, we feel that Coleman's original way is still very valuable, in particular, due to its explicit nature, which we exploit and explore in this article.
Especially, if one wants to do explicit, computational work, for instance with Coleman families, of which there are very few, explicit examples, the approach using Eisenstein series (see \cite{coleman_stevens_teitelbaum} and \cite{destefano} for examples with small primes) might still have merit and in fact might at this point in time be the only option. We hope to return to this point in future work.

Finally, we would like to mention that the paper \cite{ye} is concerned, as are we, with problems of extending modular forms further into the supersingular locus. At this point, though, neither do we see immediate implications of that paper for the problems we are addressing here, nor vice versa.

\subsection*{Acknowledgements}
This work was supported by the Luxembourg National Research Fund PRIDE17/1224660/GPS.

\section{Notation and preliminaries}\label{sec:prelim} Consider a finite extension $K$ of $\Q_p$ with ring of integers $O$. By $M_k(O)$ we shall denote the $O$-module of weight $k$ modular forms on $\SL_2(\Z)$ with coefficients in $O$.

For $r\in O$ we can talk about $r$-overconvergent modular forms of tame level $1$. We will only be dealing with weight $0$ forms, i.e., modular functions, that are holomorphic at $\infty$. The $r$-overconvergent of these with coefficients in $O$ form an $O$-module that we will denote by $M_0(O,r)$.

Most of our arguments will proceed via consideration of ``the'' Katz expansion of such forms: for each $i \in \N$, there is a (non-unique) direct sum decomposition
$$
M_{i(p-1)}(\Z_p) = E_{p-1} \cdot M_{(i-1)(p-1)}(\Z_p) \oplus B_i(\Z_p) .
$$

In section \ref{sec:proof_main_thm} we will make a specific, fixed choice of these splittings that is convenient both theoretically and computationally. For now it suffices to say that an element $f\in M_0(O,r)$ has a ``Katz expansion''
$$
f = \sum_{i=0}^{\infty} \frac{b_i}{E_{p-1}^i}
$$
where $b_i\in B_i(O)$ satisfy $v_p(b_i) \ge i\cdot v_p(r)$ for all $i$, as well as $v_p(b_i) - i\cdot v_p(r) \rightarrow \infty$ for $i\rightarrow \infty$. This expansion is unique once the splittings above have been fixed. One should note that these expansions are not necessarily exactly the ones that Katz introduced in \cite{katz_padic} (the reason being that he used a the geometric language and had to contend with the usual issues when the level is $1$.) However, all we will be concerned with are growth properties of the valuations of the $b_i$ and these are independent of which splitting we use. But see the more general discussion in \cite[Section 2]{kr_eisenstein}, for instance. We should note here that the modules $B_i$ obviously depend on $p$ though out of convenience we will suppress that information from our notation.

From \cite{kr_eisenstein} we shall also borrow the following notation. For a rational $\rho \in [0,1]$ let $M_0(O,\ge \rho)$ denote the $O$-module of forms $f$ such that $f\in M_0(O,r)$ for some $r$, and such that for the coefficients $b_i$ of the Katz expansion of $f$ we have $v_p(b_i) \ge \rho \cdot i$ for all $i$.

Elementary considerations (\cite[Proposition 2.3]{kr_eisenstein}) show that an element $f\in M_0(O,r)$ is in $M_0(O,\ge \rho)$ if and only if $f\in M_k(O',r')$ whenever $K'/K$ is a finite extension with ring of integers $O'$ and $r'\in O'$ satisfies $v_p(r') < \rho$. Again, this is the case if and only if we have $f\in M_0(O,r)$ for some $r$ as well as $f\in M_k(O',r')$ for a sequence of finite extensions $K'/K$ with rings of integers $O'$ and elements $r'$ such that $v_p(r')$ converges to $\rho$ from below.

\section{Proof of the main theorems}\label{sec:proof_main_thm}

\subsection{Existence of the ``formal Katz expansion''}\label{subsec:proof_thmBa}

\begin{proof}[Proof of Theorem \ref{thm:main_B}, part (a)] We start the proof by repeating the observation made in section 5 of \cite{buzzard_kilford_2-adic} that with $w = w(\k)$ we have a formal power series expansion
$$
\frac{\Es_\k}{V(\Es_\k)} = \sum_{n=0}^{\infty} \left( \sum_{j=0}^{\infty} a_{nj} w^j \right) \cdot q^n \in \Z_p[[w,q]]
$$
in the sense that if we specialize $w$ on the right hand side to $w=w(\k)$ for a character $\kappa \in \B \backslash \{ 1\}$, we obtain the $q$-expansion of the function $\frac{\Es_\k}{V(\Es_\k)}$. (The argument at the beginning of \cite[Section 5]{buzzard_kilford_2-adic} is for $p=2$, but carries over to a general prime $p$.)

For Katz expansions at tame level $1$ it is both theoretically and computationally convenient to use the idea of Lauder \cite{lauder_comp} of exploiting the existence of ``Miller bases'' for modular forms of level $1$: Put $d_{s(p-1)} := \dim M_{s(p-1)}(\Q_p)$. There are splittings
$$
M_{i(p-1)}(\Z_p) = E_{p-1} \cdot M_{(i-1)(p-1)}(\Z_p) \oplus B_i(\Z_p) .
$$
of $\Z_p$-modules where the free $\Z_p$-module $B_i(\Z_p)$ has a basis
$$
\{ g_{i,j} \mid d_{(i-1)(p-1)} \le j \le d_{i(p-1)} - 1 \}
$$
with the property that the $q$-expansion of $g_{i,j}$ starts with $q^j$ (for $i=0$ the definition is $g_{0,0} := 1$.) Cf.\ for instance \cite[Section 5]{kr_eisenstein} for explicit formulas for the $g_{i,j}$. This means that the (infinite) matrix that has the coefficients of the $q$-expansions
$$
g_{0,0}(q), \ldots, g_{i,d_{(i-1)(p-1)}}(q),\ldots ,g_{i,d_{i(p-1)}-1}(q),\ldots,
$$
as rows will be upper triangular with $1$s in the diagonal.

Since $E_{p-1}(q) \in 1 + pq \Z_p[[q]]$ we also have, formally, $E_{p-1}^{-i}(q) \in 1 + pq \Z_p[[q]]$ for each $i\ge 0$. Thus, the matrix with rows the coefficients of the (formal) $q$-expansions
$$
g_{0,0}(q) \cdot 1, \ldots, g_{i,d_{(i-1)(p-1)}}(q) E_{p-1}^{-i}(q),\ldots ,g_{i,d_{i(p-1)}-1}(q) E_{p-1}^{-i}(q),\ldots,
$$
is again upper triangular with $1$s in the diagonal.

It follows from these considerations that we have an isomorphism $\phi: \prod_{i\ge 0} B_i(\Z_p) \cong \Z_p[[q]]$ of $\Z_p$-modules given by
$$
\phi((b_i)_{i\ge 0}) := \sum_{i\ge 0} b_i(q) E_{p-1}^{-i}(q) .
$$

In particular, for each $j$ we have a sequence of unique elements $b_{ij}\in B_i(\Z_p)$, $i\ge 0$, such that
$$
\sum_{n=0}^{\infty} a_{nj} q^n = \sum_{i=0}^{\infty} \frac{b_{ij}(q)}{E_{p-1}^i(q)} .
$$

Define then
$$
H(w,q) := \sum_{i=0}^{\infty} \frac{\sum_{j=0}^{\infty} b_{ij}(q) w^j}{E_{p-1}^i(q)}
$$
as a formal power series in $w$ and $q$ with coefficients in $\Z_p$.

Consider now a character $\kappa \in \B \backslash \{ 1\}$. Let $O$ be the ring of integers of an extension of $\Q_p$ large enough to contain the values of $\k$. Let $\p$ be the maximal ideal of $O$ and let us consider the specialization $H(w(\k),q)$ modulo $\p^m$ for a fixed $m\in\N$. As $v_p(w(\k))>0$ there is $j(m)\in \N$ such that $w(\k)^j \equiv 0 \pmod{\p^m}$ for $j > j(m)$. We then find in $O/\p^m$:
\begin{eqnarray*} H(w(\k),q) & = & \sum_{i=0}^{\infty} \frac{\sum_{j=0}^{\infty} b_{ij}(q) w(\k)^j}{E_{p-1}^i(q)} \equiv \sum_{i=0}^{\infty} \frac{\sum_{j=0}^{j(m)} b_{ij}(q) w(\k)^j}{E_{p-1}^i(q)} = \sum_{j=0}^{j(m)} \left( \sum_{i=0}^{\infty} \frac{b_{ij}(q)}{E_{p-1}^i(q)} \right) w(\k)^j \\
& = & \sum_{j=0}^{j(m)} \left( \sum_{n=0}^{\infty} a_{nj} q^n \right) w(\k)^j = \sum_{n=0}^{\infty} \left( \sum_{j=0}^{j(m)} a_{nj} w(\k)^j \right) q^n \\
& \equiv & \sum_{n=0}^{\infty} \left( \sum_{j=0}^{\infty} a_{nj} w(\k)^j \right) q^n = \frac{\Es_\k}{V(\Es_\k)}(q) .
\end{eqnarray*}
As this congruence holds for all $m\in\N$ we conclude that
$$
\frac{\Es_\k}{V(\Es_\k)}(q) = H(w(\k),q) = \sum_{i=0}^{\infty} \frac{\sum_{j=0}^{\infty} b_{ij}(q) w(\k)^j}{E_{p-1}^i(q)}
$$
in $O[[q]]$.

Now, as we remarked above the function $\frac{\Es_\k}{V(\Es_\k)}$ is an overconvergent modular function with a Katz expansion
$$
\frac{\Es_\k}{V(\Es_\k)} = \sum_{i=0}^{\infty} \frac{\b_i(\k)}{E_{p-1}^{i}}
$$
where $\b_i(\k)\in B_i(O)$ for all $i$. Then $\sum_{i=0}^{\infty} \frac{\b_i(\k)(q)}{E_{p-1}^{i}(q)} = \sum_{i=0}^{\infty} \frac{\sum_{j=0}^{\infty} b_{ij}(q) w(\k)^j}{E_{p-1}^i(q)}$ in $O[[q]]$ and so
$\b_i(k)(q) = \sum_{j=0}^{\infty} b_{ij}(q) w(\k)^j$ for all $i$ by the injectivity of the isomorphism $\phi$ above.
Then $\b_i(\k) = \sum_{j=0}^{\infty} b_{ij} w(\k)^j$ for all $i$ by the $q$-expansion principle.
\end{proof}

As explained in the introduction above, the non-trivial part of Theorem \ref{thm:main_B} is part (b) that will be obtained by using information from \cite{kr_eisenstein}, specifically information about the overconvergence of modular functions $\frac{E^{\ast}_k}{V(E^{\ast}_k)}$ for classical weights $k\in (p-1)\N$: if we combine information about the rate of overconvergence of these modular functions, cf.\ \cite[Theorem A]{kr_eisenstein}, with part (a) of Theorem \ref{thm:main_B}, we obtain a statement about the growth w.r.t.\ $i$ of the valuations of infinite sums
$$
\sum_{j=0}^{\infty} b_{ij} w^j
$$
with $w$ corresponding to such classical weights. The combinatorial and linear algebra observations of the next subsection will show that this suffices to make a statement about the valuations of the modular forms $b_{ij}$ themselves.

\subsection{Valuations of the inverse Vandermonde matrix}\label{subsec:comb_linalg}

In this section, $p$ is any prime number.
Let $n \in \N$ and $x_0,\dots,x_{n-1} \in \C_p$ be pairwise distinct.
Consider the Vandermonde matrix
\[ V = V(x_0,\dots,x_{n-1}) = \left(\begin{smallmatrix}
1&x_0&x_0^2& \cdots & x_0^{n-1} \\
1&x_1&x_1^2& \cdots & x_1^{n-1} \\
1&x_2&x_2^2& \cdots & x_2^{n-1} \\
\vdots & \vdots & \vdots & \ddots & \vdots\\
1&x_{n-1}&x_{n-1}^2& \cdots & x_{n-1}^{n-1} \\
\end{smallmatrix}\right).\]

The following lemma appears to be well-known, but we provide the short proof for lack of a convenient reference.

\begin{lem}\label{lem:invV}
Let $0 \le i,j \le n-1$. Then the coefficient of the matrix $V(x_0,\dots,x_{n-1})^{-1}$ at position $(i+1,j+1)$ equals
\[ (-1)^{n-1-i} \cdot \frac{s_{n-1-i}(x_0,\dots,\hat{x_j},\dots,x_{n-1})}{\prod_{0 \le \ell \le n-1, \ell \neq j} (x_j - x_\ell)},\]
where $s_d(\dots)$ is the elementary symmetric polynomial of degree $0 \le d \le n-1$ in $n-1$ variables (the hat in $\hat{x_j}$ means that the variable $x_j$ is omitted).
\end{lem}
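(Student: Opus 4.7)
The plan is to identify $V$ with the matrix (in the monomial basis of $\C_p[X]_{<n}$) of the evaluation map $\mathrm{ev}: p \mapsto (p(x_0),\dots,p(x_{n-1}))$: for $p(X) = \sum_{i=0}^{n-1} c_i X^i$ one checks directly that $V \cdot (c_0,\dots,c_{n-1})^T$ is the vector of values $(p(x_j))_j$. Since the $x_j$ are pairwise distinct, this map is an isomorphism, and its inverse is given explicitly by Lagrange interpolation: the vector $(y_0,\dots,y_{n-1})$ is the image of the unique polynomial $p(X) = \sum_{j=0}^{n-1} y_j L_j(X)$ with
\[ L_j(X) := \prod_{0 \le \ell \le n-1,\, \ell \neq j} \frac{X - x_\ell}{x_j - x_\ell}. \]
Consequently, the $j$th column of $V^{-1}$ is the coefficient vector of $L_j(X)$, so the $(i+1,j+1)$ entry of $V^{-1}$ is precisely the coefficient of $X^i$ in $L_j(X)$.

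To read off that coefficient, I would use the standard Vieta expansion
\[ \prod_{\ell \neq j}(X - x_\ell) = \sum_{d=0}^{n-1} (-1)^{n-1-d}\, s_{n-1-d}(x_0,\dots,\hat{x_j},\dots,x_{n-1})\, X^d, \]
valid because elementary symmetric polynomials in the $n-1$ roots $\{x_\ell : \ell \neq j\}$ are the (signed) coefficients of the monic polynomial with exactly those roots. Setting $d=i$, dividing by $\prod_{\ell \neq j}(x_j - x_\ell)$, and reading off the coefficient of $X^i$ in $L_j(X)$ yields exactly the formula in the statement.

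No serious obstacle arises; the proof is essentially a repackaging of Lagrange interpolation together with Vieta's formulas. The only care required is bookkeeping: lining up the matrix indices $(i+1,j+1)$ with the corresponding power $X^i$ (of degree $i$) and the complementary elementary symmetric polynomial $s_{n-1-i}$ so that the sign $(-1)^{n-1-i}$ and the subscript $n-1-i$ come out correctly.
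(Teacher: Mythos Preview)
Your proof is correct and is essentially the same as the paper's: both reduce to the Vieta expansion $\prod_{\ell\neq j}(T-x_\ell)=\sum_i (-1)^{n-1-i}s_{n-1-i}(\dots,\hat{x_j},\dots)T^i$, with the paper verifying $VW=I$ directly by substituting $T=x_k$, while you phrase the same computation as Lagrange interpolation identifying the columns of $V^{-1}$ with the coefficient vectors of the $L_j$. The only difference is packaging; the mathematical content is identical.
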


\begin{proof}
We start from the formula defining the elementary symmetric polynomials in $n-1$ variables
$\prod_{\ell=1}^{n-1} (T-t_\ell) = \sum_{i=0}^{n-1} (-1)^{n-1-i} \cdot s_{n-1-i}(t_1,\dots,t_{n-1}) \cdot T^i$
and replace $(t_1,\dots,t_{n-1})$ by $(x_0,\dots,\hat{x_j},\dots,x_{n-1})$ and $T$ by $x_k$ for $0 \le j,k \le n-1$, leading to
$\prod_{0 \le \ell \le n-1, \ell \neq j} (x_k - x_\ell)  = \sum_{i=0}^{n-1}  (-1)^{n-1-i} \cdot s_{n-1-i}(x_0,\dots,\hat{x_j},\dots,x_{n-1})\cdot x_k^i$, implying the claim.
\end{proof}

We need to study the valuations of denominators occurring in the inverse Vandermonde matrix. In the next proposition we prove a bit more than we will actually need for the proof of Theorem \ref{thm:main_B}. We do this in order to show that the estimates that we get from the proposition are in fact optimal.

\begin{prop}\label{prop:val_denomV} Let $S\subseteq \Z_p^{\times}$ be a finite subset and put $n:=|S|$. For $x\in S$ put
$$
v(S,x) := v_p\left(\prod_{s \in S, s \neq x} (x-s) \right) = \sum_{s \in S, s \neq x} v_p(x-s) .
$$

Then
$$
\max_{x \in S} v(S,x) \ge \sum_{i=1}^\infty \left\lfloor \frac{n-1}{(p-1)p^{i-1}} \right\rfloor =: f(n) .
$$

Furthermore, for each $n\in \N$ there exists $S\subseteq \Z_p^{\times}$ with $|S|=n$ such that $\max_{x \in S} v(S,x) = f(n)$. For instance, one has equality if $S$ consists of the first $n$ natural numbers prime to $p$.
\end{prop}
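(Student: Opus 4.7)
The plan is to reformulate $v(S,x)$ as a count of residue-class coincidences,
\[ v(S,x) = \sum_{i=1}^{\infty} \bigl( m_i(x) - 1 \bigr), \qquad m_i(x) := \#\{s\in S : s \equiv x \pmod{p^i}\}, \]
a sum that is finite since $m_i(x)=1$ once $p^i$ exceeds all pairwise $p$-adic distances from $x$. The bound will then follow from pigeonhole combined with an inductive shift-and-divide argument.

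I would first isolate the following auxiliary statement: for every finite $T\subset \Z_p$ with $|T|=m$,
\[ \max_{t\in T} v(T,t) \ \ge\ g(m) \ :=\ \sum_{i\ge 1}\left\lfloor \tfrac{m-1}{p^i}\right\rfloor, \]
and prove it by induction on $m$. Pick a residue class modulo $p$ containing the maximal number $m_1$ of elements of $T$, and let $T^* \subset \Z_p$ be the image of this class under $t\mapsto (t-\tilde c)/p$ for a lift $\tilde c \in \Z$. For $t$ in the chosen class one has the identity
\[ v(T,t) = (m_1-1) + v\bigl(T^*, (t-\tilde c)/p\bigr), \]
since only the $m_1-1$ other elements of the same class contribute, each losing one unit of $p$-adic valuation under the shift-and-divide. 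Pigeonhole gives $m_1 \ge \lceil m/p\rceil =: m^*$. If $m_1 < m$, the inductive hypothesis applies to $T^*$; if $m_1 = m$, one replaces $T$ by $T^*$ and iterates, a process that terminates because any two distinct elements of $\Z_p$ eventually land in distinct residue classes modulo a high enough power of $p$. The telescoping identity
\[ g(m) = (m^*-1) + g(m^*),\]
which follows from $\lceil m/p\rceil-1 = \lfloor (m-1)/p\rfloor$ together with $\lfloor \lfloor a/b\rfloor/c\rfloor = \lfloor a/(bc)\rfloor$, then closes the induction (monotonicity of $g$ absorbs the slack when $m_1>m^*$).

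The main inequality follows by running the same argument one more time, now exploiting that $S\subset \Z_p^{\times}$ occupies only $p-1$ classes modulo $p$: the densest class has $n_1 \ge \lceil n/(p-1)\rceil =: n_1^*$ elements, and one shift-and-divide step delivers $\max_{x\in S} v(S,x) \ge (n_1-1) + g(n_1)$, which by the analogous telescoping identity $f(n) = (n_1^*-1) + g(n_1^*)$ and monotonicity of $g$ is at least $f(n)$. For optimality I take $S_0$ to consist of the first $n$ naturals coprime to $p$: the natural enumeration of $\N\cap\Z_p^{\times}$ cycles evenly through the $(p-1)p^{i-1}$ classes of $(\Z/p^i\Z)^{\times}$, so writing $n = q_i (p-1)p^{i-1} + r_i$ with $0 \le r_i < (p-1)p^{i-1}$, exactly $r_i$ of these classes contain $q_i+1$ elements of $S_0$ and the rest contain $q_i$. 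Thus $m_i(x)-1 \le q_i = \lfloor (n-1)/((p-1)p^{i-1})\rfloor$ for every $x \in S_0$ and every $i$, giving $v(S_0,x) \le f(n)$ for all $x \in S_0$; combined with the general lower bound this yields $\max_{x \in S_0} v(S_0,x) = f(n)$.

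The main obstacle is the degenerate case in the auxiliary induction in which $T$ is concentrated in a single residue class modulo $p$: a naive induction on cardinality stalls there and has to be replaced by a finite iteration of the shift-and-divide, the termination of which rests on the distinctness of the elements. The rest is bookkeeping with floor and ceiling functions, elementary but to be handled carefully in order to secure the telescoping identity linking $g$ to $f$.
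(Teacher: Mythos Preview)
Your argument is correct and, for the lower bound, follows essentially the same route as the paper: pigeonhole on residue classes, a shift-and-divide step, and induction on the cardinality, with the first step exploiting that $S\subset\Z_p^\times$ meets only $p-1$ classes modulo~$p$. Your iteration for the degenerate case $m_1=m$ is exactly the paper's $r(S)$ device unrolled into a loop.

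Where you differ is in the optimality part. The paper tracks the recursion, choosing specific digits $d_0=1$, $d_i=0$ for $i\ge 1$, and checks that equality holds at every step of the shift-and-divide. Your route via the reformulation $v(S,x)=\sum_{i\ge 1}(m_i(x)-1)$ is more direct: you bound each summand separately by the corresponding term of $f(n)$ using the cyclic distribution of the first $n$ naturals coprime to $p$ among the classes of $(\Z/p^i\Z)^\times$. This buys you the upper bound $v(S_0,x)\le f(n)$ for \emph{every} $x\in S_0$, not just the maximum, which is slightly stronger and avoids chasing equality through the induction. One small slip to fix: your displayed identity $q_i=\lfloor (n-1)/((p-1)p^{i-1})\rfloor$ fails when $(p-1)p^{i-1}\mid n$ (then $r_i=0$ and $q_i$ exceeds the floor by~$1$), but in that case every class contains exactly $q_i$ elements, so $m_i(x)-1=q_i-1=\lfloor (n-1)/((p-1)p^{i-1})\rfloor$ and the conclusion stands; just phrase the two cases separately.
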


\begin{proof} For the beginning of the argument we allow $S$ more generally to be any non-empty finite subset of $\Z_p$ of cardinality $n$.

Let $r(S)$ be the maximal $r \in \Z_{\ge 0}$ such that all elements in $S$ are congruent to each other modulo~$p^r$. We write $S' := \{ s \,\mathrm{div}\, p^{r(S)} \;|\; s \in S \}$ where $s \,\mathrm{div}\, p^r$ denotes the number $\sum_{i\ge r} a_i p^{i-r} $ if $s=\sum_{i\ge 0} a_i p^i$ is the standard $p$-adic expansion of $s$, i.e., with the $a_i$ in $\{ 0,\ldots,p-1\}$. Thus, if $s$ is an ordinary integer, $s \,\mathrm{div}\, p^r$ is the quotient of division with remainder of $s$ by $p^r$. We observe that $|S| = |S'|$.

For any $d \in \Z/p\Z$, let $S_d = \{s \,\mathrm{div}\, p \;|\; s \in S, s \equiv d \pmod{p}\}$.

By the definition of $S'$ and for any $d \in  \Z/p\Z$, we have
\begin{align*}
\max_{x \in S} v(S,x)
&=   r(S) \cdot (|S|-1) + \max_{x \in S'} v(S',x) \\
&\ge r(S) \cdot (|S|-1) + \max_{x \in S', ~x \equiv d \pmod{p}} v(S',x)\\
&=   r(S) \cdot (|S|-1) + ( |(S')_d| -1 ) +  \max_{x \in (S')_d} v((S')_d,x)
\end{align*}
because only those $s \in S'$ contribute to $\sum_{s \in S', s \neq x} v_p(x-s)$ that are congruent to $x$ modulo~$p$.

Now, for cardinality reasons there must exist $d \in  \Z/p\Z$ such that $|(S')_d| \ge \left\lceil \frac{|S'|}{p} \right\rceil = \left\lceil \frac{|S|}{p} \right\rceil$.
Applying this we obtain
$$
\max_{x \in S} v(S,x) \ge r(S) \cdot (|S|-1) + \left\lceil \frac{|S|}{p} \right\rceil - 1 + \max_{x \in (S')_d} v((S')_d,x)
$$
from which we can see the inequality
\begin{equation}\label{eq:SZp}
\max_{x \in S} v(S,x) \ge r(S) \cdot (|S|-1) + \sum_{i=1}^\infty\left(\left\lceil \frac{|S|}{p^i} \right\rceil -1\right)
\end{equation}
by induction on $|S|$: if $|S|=1$ the statement is trivial. If $|S|>1$ then for the induction step we use that $|(S')_d|<|S'|=|S|$ for any $d$, apply the induction hypothesis to $\max_{x \in (S')_d} v((S')_d,x)$, drop the term $r((S')_d) \cdot (|(S')_d|-1)$, and use the inequality $|(S')_d| \ge \left\lceil \frac{|S|}{p} \right\rceil$.

The inequality \eqref{eq:SZp} obviously implies the inequality
\begin{equation}\label{eq:SZp'}
\max_{x \in S} v(S,x) \ge \sum_{i=1}^\infty\left(\left\lceil \frac{|S|}{p^i} \right\rceil -1\right) = \sum_{i=1}^\infty \left\lfloor \frac{|S|-1}{p^i} \right\rfloor .
\end{equation}

Let us now assume the setup of the proposition, i.e., that $S \subseteq \Z_p^\times$. Assume first that $r(S)=0$ so that $S'=S$. In that case, we can improve \eqref{eq:SZp} slightly because there now exists $d \in  (\Z/p\Z)^\times$ such that $|(S')_d|=|S_d| \ge \left\lceil \frac{|S|}{p-1} \right\rceil$. Then as above we have
\[ \max_{x \in S} v(S,x) \ge  ( |S_d| -1 ) +  \max_{x \in S_d} v(S_d,x) \ge \left\lceil \frac{|S|}{p-1} \right\rceil -1 +  \max_{x \in S_d} v(S_d,x).\]
We now apply \eqref{eq:SZp'} to the right most term and obtain
\begin{equation}\label{eq:SZptimes}
\max_{x \in S} v(S,x) \ge \sum_{i=1}^\infty \left(\left\lceil \frac{|S|}{(p-1)p^{i-1}} \right\rceil -1\right) = \sum_{i=1}^\infty \left\lfloor \frac{|S|-1}{(p-1)p^{i-1}} \right\rfloor = f(n) .
\end{equation}

Next we claim that this formula also holds when $r(S) \ge 1$. Indeed, applying again \eqref{eq:SZp}, we have
\begin{align*}
\max_{x \in S} v(S,x)
&\ge r(S) \cdot (|S|-1) + \sum_{i=1}^\infty\left(\left\lceil \frac{|S|}{p^i} \right\rceil -1\right) = r(S)(|S|-1) + \sum_{i=1}^\infty\left(\left\lfloor \frac{|S|-1}{p^i} \right\rfloor\right)\\
&\ge \sum_{i=0}^\infty\left(\left\lfloor \frac{|S|-1}{p^i} \right\rfloor\right)\ge   \sum_{i=0}^\infty\left(\left\lfloor \frac{1}{p-1} \cdot \frac{(|S|-1)}{p^i} \right\rfloor\right)
= \sum_{i=1}^\infty \left\lfloor \frac{|S|-1}{(p-1)p^{i-1}} \right\rfloor .\\
\end{align*}

Moreover, the above analysis shows that \eqref{eq:SZptimes} is an equality if there is a sequence $d_0,d_1,\ldots \in \Z/p\Z$ (only finitely many terms matter) such that the recursively defined sets $S^{(0)}:= S$ and $S^{(i+1)} = (S^{(i)})_{d_i}$ for $i \ge 0$ satisfy $r(S^{(i)})=0$ for all~$i \ge 0$, as well as $|S^{(1)}| = \left\lceil \frac{|S|}{p-1} \right\rceil$ and $|S^{(i+1)}| = \left\lceil \frac{|S^{(i)}|}{p} \right\rceil$ for all $i\ge 1$.

If $S$ consists of the first $n$ natural numbers prime to $p$ we take $d_0=1$ and $d_i =0$ for $i\ge 1$ and then these conditions are actually satisfied: writing $n=(p-1)q+r$ with $q\ge 0$, $0\le r < p-1$ one verifies that $S_1$ consists of the first $\lceil \frac{n}{p-1} \rceil = q + \lceil \frac{r}{p-1} \rceil$ consecutive integers starting from $0$. One also sees that if $\N \ni m\ge 0$ and $\Sigma = \{ 0,\ldots, m-1\}$ then $\Sigma_0$ consists of the first $\lceil \frac{m}{p} \rceil$ consecutive integers, starting from $0$.
\end{proof}

\begin{prop}\label{prop:val_poly_coeff_b}
Let $n\in \N$ and let $x_0,\dots,x_{n-1} \in \Z_p^\times$ be any set of units such that
$$
\max_{0 \le j \le n-1} v_p\left(\prod_{0 \le \ell \le n-1, \ell \neq j} (x_j - x_\ell) \right) = f(n) := \sum_{i=1}^\infty \left\lfloor \frac{n-1}{(p-1)p^{i-1}} \right\rfloor \le (n-1)\cdot \frac{p}{(p-1)^2} ,
$$
the existence of which is assured by Proposition \ref{prop:val_denomV}.
\begin{enumerate}[(a)]
\item
Let $V=V(x_0,\dots,x_{n-1})$ be the Vandermonde matrix. Then the $p$-valuation of all coefficients of $V^{-1}$ is at least $-f(n)\ge (1-n)\cdot \frac{p}{(p-1)^2}$.

\item
If $m\in \R$ and if $b_0,\dots,b_{n-1} \in \C_p$ satisfy
$$
v_p(b_0 + b_1 x_i + \cdots b_{n-1} x_i^{n-1}) \ge m
$$
for all $0 \le i \le n-1$, then
$$
v_p(b_j) \ge m-f(n) \ge m-(n-1)\cdot \frac{p}{(p-1)^2}
$$
for all $0 \le j \le n-1$.
In particular, if $m=n$ we will have
$$
v_p(b_j) \ge \left( 1 - \frac{p}{(p-1)^2} \right) \cdot n
$$
for each $j$.
\end{enumerate}
\end{prop}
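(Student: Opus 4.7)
The plan is to read the entries of $V^{-1}$ off Lemma~\ref{lem:invV} and combine them with the denominator control supplied by Proposition~\ref{prop:val_denomV}.

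For part (a), I would fix an entry in position $(i+1,j+1)$. By Lemma~\ref{lem:invV} its numerator is $\pm s_{n-1-i}$ evaluated at units in $\Z_p$; since elementary symmetric polynomials have integer coefficients, this numerator lies in $\Z_p$ and has nonnegative $p$-valuation. The denominator is exactly $\prod_{\ell \ne j}(x_j - x_\ell)$, and the hypothesis that the maximum of these valuations equals $f(n)$ means in particular that every individual such product has valuation $\leq f(n)$. Hence every entry of $V^{-1}$ has $p$-valuation $\geq -f(n)$. The conversion $-f(n)\geq (1-n)\,p/(p-1)^2$ is then just the bound of the floor sum defining $f(n)$ by the geometric series $\sum_{i\ge 1}(n-1)/((p-1)p^{i-1}) = (n-1)p/(p-1)^2$, which appears already in the statement of Proposition~\ref{prop:val_denomV}.

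For part (b), the hypothesis is precisely that $V\cdot(b_0,\dots,b_{n-1})^{\mathrm{T}}$ has every coordinate of valuation at least $m$. Multiplying by $V^{-1}$ writes each $b_j$ as a $\C_p$-linear combination of those coordinates whose coefficients come from row $j+1$ of $V^{-1}$. Taking valuations and invoking part (a) gives $v_p(b_j)\geq m-f(n)\geq m-(n-1)\,p/(p-1)^2$. Specializing to $m=n$ yields $v_p(b_j)\geq n-(n-1)p/(p-1)^2$, which is trivially $\geq (1-p/(p-1)^2)\,n$ because $(n-1)p/(p-1)^2 < n\,p/(p-1)^2$.

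I do not expect any serious obstacle: the hard combinatorial work of controlling denominators in the inverse Vandermonde matrix has already been done in Proposition~\ref{prop:val_denomV}, and what remains is an assembly of that bound with the explicit formula of Lemma~\ref{lem:invV} and a one-line geometric-series comparison. The only thing to be careful about is distinguishing the \emph{maximum} denominator valuation, which governs existence of a good configuration $S$, from the valuation at each individual $j$, which is what one actually needs to bound the entries of $V^{-1}$ uniformly; but the ``$\max = f(n)$'' hypothesis provides exactly this uniform control.
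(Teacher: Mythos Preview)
Your proposal is correct and follows essentially the same approach as the paper's proof: the paper likewise derives part~(a) directly from Lemma~\ref{lem:invV} together with the hypothesis on the $x_j$, bounds $f(n)$ by the geometric series $\frac{n-1}{p-1}\sum_{i\ge 0}p^{-i}$, and obtains part~(b) by inverting the Vandermonde system. The only minor slip is attributional: the inequality $f(n)\le (n-1)\,p/(p-1)^2$ is stated in the present proposition rather than in Proposition~\ref{prop:val_denomV}, but your geometric-series justification is exactly what is needed.
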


\begin{proof}
For the first inequality, observe that $f(n) \le \frac{n-1}{p-1} \cdot \sum_{i=0}^{\infty} \frac{1}{p^i} = (n-1)\cdot \frac{p}{(p-1)^2}$.
Part~(a) is a direct consequence of Lemma~\ref{lem:invV} and the choice of $x_0,\dots,x_{n-1} \in \Z_p^\times$.
Part~(b) follows by considering $b_0 + b_1 x_i + \cdots b_{n-1} x_i^{n-1}$ as $V$ times the vector of the $b_i$.
\end{proof}

\begin{remark}\label{rem:optimality_upper_bound} As we will see immediately below, the main ingredient from this section in the proof of Theorem \ref{thm:main_B} is Proposition \ref{prop:val_poly_coeff_b}. As we also see, the essential statement of Proposition \ref{prop:val_poly_coeff_b} is that we can choose units $x_0,\dots,x_{n-1} \in \Z_p^\times$ in such a way that we have a good upper bound for $\max_{0 \le j \le n-1} v_p\left(\prod_{0 \le \ell \le n-1, \ell \neq j} (x_j - x_\ell) \right)$. A choice of the units $x_i$ is provided by the second part of Proposition \ref{prop:val_denomV}. The purpose of the first part of Proposition \ref{prop:val_denomV} is to show that this upper bound is optimal.
\end{remark}

\subsection{Proof of Theorem \ref{thm:main_B}, part (b)}\label{subsec:proof_thmBb}
Considering the modular forms $b_{ij}\in B_i(\Z_p)$ from part (a), we will show that $v_p(b_{ij}) \ge c_p \cdot i - j$ for all $i,j$ where $c_p := \frac{2}{3} \cdot \left( 1 - \frac{p}{(p-1)^2} \right) \cdot \frac{1}{p+1}$.

Fix $i_0 \in \Z_{\ge 0}$ and let us show that $v_p(b_{i_0 j} p^j) \ge c_p \cdot i_0$ for all $j\ge 0$. As $b_{i_0 j}$ has coefficients in $\Z_p$ we certainly have $v_p(b_{i_0 j} p^j) \ge j$, and so the claim is clear if $j \ge n$ with
$$
n := \left\lceil \frac{2}{3} \cdot \frac{1}{p+1} \cdot i_0 \right\rceil
$$
as we will then have $v_p(b_{i_0 j} p^j) \ge n > c_p \cdot i_0$. Thus, we must show
$$
v_p(b_{i_0 j} p^j) \ge c_p \cdot i_0
$$
for $j=0,\ldots,n-1$.

Consider classical weights $k\in \N$ divisible by $p-1$. For such weights the corresponding point $w(k)$ in weight space is $w(k) := (1+p)^k - 1$. By part (a) of Theorem~\ref{thm:main_B}, the $i_0$th coefficient in the Katz expansion of the $p$-adic modular function $\frac{E^{\ast}_k}{V(E^{\ast}_k)}$ is
$$
\sum_{j=0}^{\infty} b_{i_0 j} w(k)^j .
$$

The crucial ingredient in the proof is now the observation that we know from \cite[Theorem A]{kr_eisenstein} that
$$
v_p(\sum_{j=0}^{\infty} b_{i_0 j} w(k)^j) \ge \frac{2}{3} \cdot \frac{1}{p+1} \cdot i_0
$$
(more precisely: \cite[Remark 4.2]{kr_eisenstein} combined with the proof of \cite[Theorem A]{kr_eisenstein} shows that we have $\frac{E^{\ast}_k}{V(E^{\ast}_k)} \in M_0(\Z_p, \ge \frac{2}{3} \cdot \frac{1}{p+1})$.)

Now, recalling again that $b_{i_0 j}$ has coefficients in $\Z_p$ and combining this with the fact that $w(k) \in p\Z$ for classical weights $k\equiv 0 \pmod{p-1}$ as above, we find from the definition of $n := \lceil \frac{2}{3} \cdot \frac{1}{p+1} \cdot i_0 \rceil$ that
$$
v_p( \sum_{j=0}^{n-1} b_{i_0 j} w(k)^j) \ge \left\lceil \frac{2}{3} \cdot \frac{1}{p+1} \cdot i_0 \right\rceil = n
$$
for every such classical weight $k$.

We write the sum on the left hand side as $\sum_{j=0}^{n-1} (b_{i_0 j} p^j) \cdot \left( \frac{w(k)}{p} \right)^j$ and notice that elementary considerations show that the numbers
$$
\frac{w(k)}{p} = \frac{(1+p)^k - 1}{p}
$$
are dense in $\Z_p$ when $k$ ranges over the classical weights $\equiv 0 \pmod{p-1}$. Then we see that Proposition~\ref{prop:val_poly_coeff_b} can be applied to deduce that $v_p(b_{i_0 j} p^j) \ge \left( 1 - \frac{p}{(p-1)^2} \right) \cdot n$ for $j=0,\ldots,n-1$. Indeed, a lower bound $v_p(b_{i_0 j}p^j) \ge m$ is equivalent to having the same lower bound for the valuations of all Fourier coefficients of $b_{i_0 j}p^j$. But then we have
$$
v_p(b_{i_0 j}p^j) \ge \left( 1 - \frac{p}{(p-1)^2} \right) \cdot n \ge \left( 1 - \frac{p}{(p-1)^2} \right) \cdot \frac{2}{3} \cdot \frac{1}{p+1} \cdot i_0 = c_p \cdot i_0
$$
for $j=0,\ldots,n-1$, and we are done.

\subsection{Proof of Theorem \ref{thm:main_C}}\label{subsec:proof_thmC} Put $w_0 := w(\kappa)$. By part (a) of Theorem \ref{thm:main_B} we have a Katz expansion
$$
\frac{\Es_{\k}}{V(\Es_{\k})} = \sum_{i=0}^{\infty} \frac{\b_i(w_0)}{E_{p-1}^{i}} .
$$
with $\b_i(w_0) := \sum_{j=0}^{\infty} b_{ij} w_0^j$. Referring back to the remarks of section \ref{sec:prelim}, all we have to show is that we have $v_p(\sum_{j=0}^{\infty} b_{ij} w_0^j) \ge c_p \cdot \min \{ 1, v_p(w_0) \} \cdot i$ for all $i$. To see this, fix an $i$, write $\rho := c_p i$, and split the sum as
$$
\sum_{j=0}^{\infty} b_{ij} w_0^j = \sum_{0\le j \le \rho} b_{ij} w_0^j  +  \sum_{j>\rho} b_{ij} w_0^j .
$$

For the terms in the first sum note that part (b) of Theorem \ref{thm:main_B} implies that their valuations are bounded from below by
$$
c_pi - j + j v_p(w_0) = c_p i - j (1-v_p(w_0)) .
$$

If now $v_p(w_0) \le 1$ this is $\ge c_p i - \rho (1-v_p(w_0)) = \rho v_p(w_0) = c_p v_p(w_0) \cdot i$, and if $v_p(w_0) \ge 1$, this is certainly $\ge c_p \cdot i$.

On the other hand, as $b_{ij} \in B_i(\Z_p)$ for all $j$, the terms in the second sum have valuations bounded from below by $j v_p(w_0) > \rho v_p(w_0) = c_p v_p(w_0) \cdot i$. We are done.

\section{Further remarks and results}\label{sec:remarks}

\subsection{The original conjecture of Coleman}\label{subsec:coleman_conj_comparison}

\subsubsection{} Coleman's conjecture \cite[Conjecture 1.1]{coleman_eisenstein} is formulated in rigid analytic terms as a conjecture concerning analytic continuation of $\frac{\Es_{\k}}{V(\Es_{\k})}$ considered as a function of two variables $(P,\k) \in X_1(p) \times \B$, $\k \neq 1$. As a consequence of Coleman's earlier results on the nonvanishing of $\frac{\Es_{\k}}{V(\Es_{\k})}$ on $Z$ (cf.\ the remarks at the bottom of p.\ 2946 of \cite{coleman_eisenstein}), this function is initially defined for $P$ in the ordinary locus $Z$ where $E_{p-1}(P)$ is a unit. Given our Theorem \ref{thm:main_B}, the value of the function at such a point is the value of the converging infinite sum
$$
\sum_{i=0}^{\infty} \left( \sum_{j=0}^{\infty} b_{ij}(P) w^j \right) E_{p-1}(P)^{-i} .
$$
where we have written $w := w(\k)$. The question is how far into the supersingular region this function extends when $v_p(w(\k)) < 1$. Let us give the core argument showing that the function extends under the condition $\frac{1}{c_p} v_p(E_{p-1}(P)) < v_p(w) < 1$ (for primes $p\ge 5$, \cite[Conjecture 1.1]{coleman_eisenstein} would say this, but with $c_p=1$.) We give the argument assuming that $P$ corresponds to an elliptic curve defined over the ring $O_0$ of integers in a finite extension of $\Q_p$. Let us choose an extension $K$ of $\Q_p$ large enough to contain $O_0$ and $w$ as well as an element $\a$ with $v_p(\a) = c_p$. Let $O$ denote the ring of integers of $K$. We can then see that the above series converges to an element of $O$: Rewriting the series as
$$
\sum_{i=0}^{\infty} \left( \sum_{j=0}^{\infty} b_{ij}(P) w^{j-c_p i} \right) \left( E_{p-1}(P)^{-1} w^{c_p} \right)^i ,
$$
since $v_p( E_{p-1}(P)^{-1} w^{c_p} ) > 0$, we see that it suffices to show that $\sum_{j=0}^{\infty} b_{ij}(P) w^{j-c_p i}$ for any fixed $i\ge 0$ converges to an element of $O$. To do so, fix an $i\ge 0$ and split up this sum as
$$
\sum_{0\le j \le c_p i} b_{ij}(P) w^{j-c_p i} + \sum_{j > c_p i} b_{ij}(P) w^{j-c_p i} .
$$

In the second sum we have $w^{j-c_p i} \in O$ for each term, and since $v_p(w)>0$, the sum converges.

The first sum is finite, and so convergence is not an issue, but we still need to see that the sum gives an element of $O$. But if for $j\le c_p i$ we define the modular form $\tilde{b}_{ij}$ to be $\tilde{b}_{ij} := \a^{-i} p^j b_{ij}$ then Theorem \ref{thm:main_B} (and the $q$-expansion principle) implies that $\tilde{b}_{ij}$ is a modular form defined over $O$ so that the value $\tilde{b}_{ij}(P)$ is in $O$. Now,
$$
b_{ij}(P) w^{j-c_p i} = \tilde{b}_{ij}(P) \cdot \a^i p^{-j} w^{j-c_p i},
$$
and since
$$
v_p(\a^i p^{-j} w^{j-c_p i}) = c_p i - j + (j-c_p i) v_p(w) \ge 0
$$
as $j\le c_p i$ and $v_p(w) < 1$, we are done.

\subsubsection{} We now show by a numerical example one cannot take $c_p=1$ in Theorem \ref{thm:main_C}: let $p=5$ and let $\c$ be the Dirichlet character of conductor $5^2$ given by $\c(7) = 1$, $\c(6) = \z$ with $\z$ a primitive $5$th root of unit. Then $\c$ can be viewed as a character on $\Z_5^{\times}$ and as such is trivial on the $4$th roots of unity. Let $\k$ be the character on $\Z_5^{\times}$ given by $\k(x) = x^4 \c(x)$. Then $\Es_{\k}$ is a classical Eisenstein series of weight $4$ on $\Gamma_1(5^2)$ with nebentypus $\c$. We have $v_5(w(\k)) = \frac{1}{4}$, and so, if we could take $c_5=1$ in Theorem \ref{thm:main_C} we would be able to conclude (via Theorem \ref{thm:main_C}) that $\Es_{\k}/V(\Es_{\k}) \in M_0(O, \ge \frac{1}{4})$ with $O$ the ring of integers of $\Q_5(\z)$. But a computation shows this not to be the case: recall that for $p=2,3,5,7,13$ where $X_0(p)$ has genus $0$, the function
$$
f_p(z) := \left( \frac{\eta(pz)}{\eta(z)}\right)^{\frac{24}{p-1}}
$$
with $\eta$ the Dedekind eta-function is a {\it Hauptmodul} for $\Gamma_0(p)$, {\it i.e.}, a generator of the function field of $X_0(p)$. D.\ Loeffler has shown, cf.\ \cite[Corollary 2.2]{loeffler}, that if $c$ is a constant with $v_p(c) = \frac{12}{p-1} v_p(r)$ then the powers of $c f_p$ give an orthonormal basis for the $r$-overconvergent modular functions of tame level $1$. Hence, if we consider the expansion
$$
\frac{\Es_{\k}}{V(\Es_{\k})} = \sum_{i=0}^{\infty} a_i f_5^i ,
$$
then the statement that $\Es_{\k}/V(\Es_{\k}) \in M_0(O, \ge \frac{1}{4})$ together with \cite[Corollary 2.2]{loeffler} implies $v_5(a_i) \ge \frac{3}{4} \cdot i$ for all $i$. But, the expansion is easy to compute from $q$-expansions as the $q$-expansion of $f_5$ starts with $q$, and one finds that $v_5(a_{10}) = 1$.

\subsubsection{} It appears to us that the precise, quantitative form of \cite[Conjecture 1.1]{coleman_eisenstein} for primes $p\ge 5$ resulted from an optimistic extrapolation from the cases $p=2,3$. Coleman proved \cite[Conjecture 1.1]{coleman_eisenstein} for $p=2,3$ as a consequence of \cite[Theorem 7]{buzzard_kilford_2-adic} and \cite[Theorem 4.2]{roe_3-adic}, respectively, theorems that are quite central in those papers. The primes $2$ and $3$ differ from primes $p\ge 5$ for all the usual reasons, but in this specific setting there are additional differences: as an inspection of the proofs of \cite[Theorem 7]{buzzard_kilford_2-adic} and \cite[Theorem 4.2]{roe_3-adic} shows, the fact that the $U$ operator at tame level $1$ and for these primes enjoys particularly strong integrality properties plays a significant role in the proofs. Those stronger integrality properties fail for primes $p\ge 5$, which is also one reason why the arguments in these papers do not generalize for primes $p\ge 5$ in any straightforward manner, as far as we can see. The stronger integrality properties of $U$ for $p=2,3$ can ultimately be seen to derive from the fact that the exponent $\frac{24}{p-1}$ occurring in the definition of the {\it Hauptmodul} $f_p$ above is divisible by $p$ precisely when $p\in \{2,3\}$.

One further observation on the difference between the cases $p=2,3$ and $p\ge 5$ is as follows. If one considers the shape of the statements of \cite[Theorem 7]{buzzard_kilford_2-adic} and \cite[Theorem 4.2]{roe_3-adic}, a naive generalization to primes $p\ge 5$ would be a statement of form $v_p(b_{ij}) \ge d_p (i-j)$ in part (b) of Theorem \ref{thm:main_B}, with some constant $d_p$ depending on $p$. Extensive numerical calculation of the $v_p(b_{ij})$, the details of which will be reported on elsewhere, strongly suggests that such a statement does not hold, but that the correct lower bound for primes $p\ge 5$ is in fact a statement of the form in part (b) of Theorem \ref{thm:main_B}. Again we see this difference between the cases $p=2,3$ and $p\ge 5$ as being connected with the above stronger integrality properties of $U$.

\subsection{The constant \texorpdfstring{$c_p$}{cp}}\label{subsec:p=5,7} We will now discuss the specific constant $c_p$ that appears in Theorems \ref{thm:main_C} and \ref{thm:main_B}. In particular, we will show that it is not optimal, at least not for all primes. We show this by improving the constant in the cases $p=5,7$ by certain ad hoc arguments, specifically:

\begin{prop}\label{prop:special_const_p=5,7} For $p=5,7$ we can take $c_p=\left( 1 - \frac{p}{(p-1)^2} \right) \cdot \frac{p-1}{p(p+1)} = \frac{p^2-3p+1}{p(p^2-1)}$ in Theorems \ref{thm:main_C} and \ref{thm:main_B}.
\end{prop}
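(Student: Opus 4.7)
The plan is to retrace the proof of Theorem \ref{thm:main_B}(b) and isolate the single place at which the factor $\frac{2}{3}$ enters. Inspecting subsection \ref{subsec:proof_thmBb}, one sees that the only use of $\frac{2}{3}$ is in invoking the overconvergence statement
$$
\frac{E^{\ast}_k}{V(E^{\ast}_k)} \in M_0\!\left(\Z_p, \ge \tfrac{2}{3(p+1)}\right)
$$
for classical $k\in (p-1)\N$, which is borrowed from \cite[Theorem A and Remark 4.2]{kr_eisenstein}. Accordingly, to prove Proposition \ref{prop:special_const_p=5,7} it suffices to upgrade, for $p \in \{5,7\}$, this to
$$
\frac{E^{\ast}_k}{V(E^{\ast}_k)} \in M_0\!\left(\Z_p, \ge \tfrac{p-1}{p(p+1)}\right)
$$
uniformly in $k \in (p-1)\N$. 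Granting this, the rest of the proof of Theorem \ref{thm:main_B}(b) carries over word for word: one sets $n := \lceil \tfrac{p-1}{p(p+1)} i_0 \rceil$ instead of $n := \lceil \tfrac{2}{3(p+1)} i_0 \rceil$, applies Proposition \ref{prop:val_poly_coeff_b} in exactly the same way, and obtains the new $c_p = \left(1-\tfrac{p}{(p-1)^2}\right)\cdot \tfrac{p-1}{p(p+1)}$. Theorem \ref{thm:main_C} then follows from the improved Theorem \ref{thm:main_B} via the argument of subsection \ref{subsec:proof_thmC} unchanged.

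To establish the upgraded overconvergence rate for $p=5$ and $p=7$, I would exploit the genus-zero structure of $X_0(p)$ available at these primes. By \cite[Corollary 2.2]{loeffler}, the powers of a suitably normalized Hauptmodul $c f_p$, with $f_p = (\eta(pz)/\eta(z))^{24/(p-1)}$ and $v_p(c) = \tfrac{12}{p-1}v_p(r)$, form an orthonormal basis for the $r$-overconvergent modular functions of tame level $1$. In this basis the overconvergence rate is read off directly from the $p$-adic valuations of the coefficients of the expansion $\frac{E^{\ast}_k}{V(E^{\ast}_k)} = \sum_{i\ge 0} a_i^{(k)} f_p^i$. The required statement is a uniform lower bound $v_p(a_i^{(k)}) \ge \tfrac{p-1}{p(p+1)}\cdot i$ for all $i$ and all $k\in (p-1)\N$; this is a small-prime refinement of the bounds underlying \cite[Theorem A]{kr_eisenstein}, and for $p=5,7$ the exponent $\tfrac{12}{p-1}$ equals $3$ respectively $2$, which one expects to create enough extra divisibility in the $\eta$-products defining $f_p$ to push the constant from $\tfrac{2}{3}$ up to $\tfrac{p-1}{p}$.

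The main obstacle is precisely producing the improved bound \emph{uniformly in the weight $k$}. Pointwise, for a fixed $k$, the bound can be read off from a finite computation in the $f_p$-basis, but the argument of \cite{kr_eisenstein} needs to be re-examined with these sharper small-prime inputs to extract a uniform constant. A natural route is to mirror the inductive structure in \cite{kr_eisenstein}, substituting the explicit $\eta$-product identities for $f_5$ and $f_7$ at the steps where general overconvergence rates are estimated; alternatively, once the improved rate is verified on a dense subset of weights, the formal Katz expansion of Theorem \ref{thm:main_B}(a) together with a continuity argument on $\mathcal{B}$ propagates the estimate to all $k \in (p-1)\N$. Either way, the heart of the argument is this uniform small-prime overconvergence estimate; everything downstream is a direct re-run of the proofs in subsections \ref{subsec:proof_thmBb} and \ref{subsec:proof_thmC}.
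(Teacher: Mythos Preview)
Your reduction is exactly right and matches the paper: the sole change needed is to replace the input $\frac{E^{\ast}_k}{V(E^{\ast}_k)} \in M_0(\Z_p,\ge \tfrac{2}{3(p+1)})$ by the stronger $\frac{E^{\ast}_k}{V(E^{\ast}_k)} \in M_0(\Z_p,\ge \tfrac{p-1}{p(p+1)})$, after which subsections \ref{subsec:proof_thmBb} and \ref{subsec:proof_thmC} run unchanged with the new constant. The paper states precisely this upgraded overconvergence as a separate proposition and then says the proof of Proposition \ref{prop:special_const_p=5,7} is just the re-run.

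The genuine gap in your proposal is the proof of that upgraded overconvergence itself. Your suggested route via the Hauptmodul expansion and ``extra divisibility in the $\eta$-products'' is not carried out, and it is not clear how it would yield the precise constant $\tfrac{p-1}{p(p+1)}$ uniformly in $k$; the continuity-on-$\B$ idea does not help either, since one needs the bound at \emph{all} classical $k\in (p-1)\N$ to feed into Proposition \ref{prop:val_poly_coeff_b}, and these already form a dense set. The paper's argument is quite different and is the actual content here. It works not with $\frac{E^{\ast}_k}{V(E^{\ast}_k)}$ directly but with the auxiliary functions $\es_n := E^{\ast}_{n(p-1)}/E_{p-1}^n$, and proves that $\es_n \in M_0(\Z_p,\ge \tfrac{p-1}{p+1})$ for $p=5,7$. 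This is done by looking at the Katz expansion $\es_n = 1 + \sum_{i\ge 1} b_i/E_{p-1}^i$: from \cite[Theorem C]{kr_eisenstein} one already has $v_p(b_i) \ge \tfrac{p}{p+1}i - 1$, which gives the desired bound for $i\ge p+1$; the congruence $\es_n \equiv 1 \pmod{p^2}$ handles the smallest $i$; and the remaining finitely many $i$ are dispatched by the ad hoc observation that certain of the modules $B_i$ vanish for $p=5,7$ (e.g.\ $B_4=B_5=0$ when $p=5$, $B_3=B_5=B_7=0$ when $p=7$) together with an integrality rounding. Finally, one passes from $\es_n$ to $\frac{E^{\ast}_k}{V(E^{\ast}_k)}$ via $\frac{E^{\ast}_k}{V(E^{\ast}_k)} = \frac{\es_n}{V(\es_n)}\cdot (E_{p-1}/V(E_{p-1}))^n$, using that Frobenius sends $M_0(O,r^p)$ to $M_0(O,r)$ and the Coleman--Wan theorem for $E_{p-1}/V(E_{p-1})$; this is what produces the extra factor $1/p$ in the rate. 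None of these ingredients appear in your sketch, and they are what make the proposition go through.
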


Notice first from the proof of part (b) of Theorem \ref{thm:main_B} that the constant appears as the product of two factors: $c_p = a_p \cdot b_p$ where $a_p := 1 - \frac{p}{(p-1)^2}$ is the constant appearing in Proposition \ref{prop:val_poly_coeff_b} whereas $b_p := \frac{2}{3} \cdot \frac{1}{p+1}$ comes from results of \cite{kr_eisenstein} that imply $\frac{E^{\ast}_k}{V(E^{\ast}_k)} \in M_0(\Z_p, \ge b_p)$ for classical weights $k\in\N$ divisible by $p-1$.

Here, the constant $a_p$ does not seem to admit any essential improvement, cf.\ Remark \ref{rem:optimality_upper_bound}. On the other hand, the constant $b_p$ in the above is not optimal, at least not for all primes. Let us briefly recall the origin of the constant $b_p$ in \cite{kr_eisenstein}: the statement that we have $\frac{E^{\ast}_k}{V(E^{\ast}_k)} \in M_0(\Z_p, \ge b_p)$ for classical weights $k$ divisible by $p-1$ derives from the more precise statement that $\frac{V(E^{\ast}_k)}{E^{\ast}_k} \in \frac{1}{p}M_0(\Z_p, \ge \frac{1}{p+1})$ (\cite[Theorem A]{kr_eisenstein}); judging from numerical experiments, this latter statement actually does appear close to optimal. As arguments in \cite{kr_eisenstein} show, the statement that we have $\frac{E^{\ast}_k}{V(E^{\ast}_k)} \in M_0(\Z_p, \ge b_p)$ with the above value of $b_p$ is obtained as a consequence of the more precise statement coupled with the congruence $E_{n(p-1)} \equiv E_{p-1}^n \pmod{p^2}$ (for primes $p\ge 5$, $n\in \N$.)

For the primes $p=5,7$ we can improve the constant $b_p$ as follows.

\begin{prop}\label{prop:special_oc_p=5,7} If $p\in \{5,7\}$ and $k\in\N$ is divisible by $p-1$ then
$$
\frac{\Es_k}{V(\Es_k)} \in M_0\left( \Z_p, \ge \frac{p-1}{p(p+1)} \right) .
$$
\end{prop}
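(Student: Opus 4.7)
The plan is to sharpen the chain of deductions yielding the general constant $b_p = \frac{2}{3(p+1)}$ by exploiting features special to $p \in \{5,7\}$. Recall that the general estimate factors as $\frac{2}{3} \cdot \frac{1}{p+1}$, with $\frac{1}{p+1}$ the sharp overconvergence rate of $V(\Es_k)/\Es_k$ from \cite{kr_eisenstein}, and $\frac{2}{3}$ arising as the loss when inverting to pass from $V(\Es_k)/\Es_k$ to $\Es_k/V(\Es_k)$, coupled with the congruence $E_{n(p-1)} \equiv E_{p-1}^n \pmod{p^2}$. The target replaces $\frac{2}{3}$ by $\frac{p-1}{p}$, an improvement by the factor $\frac{3(p-1)}{2p}$, equal to $\frac{6}{5}$ for $p=5$ and $\frac{9}{7}$ for $p=7$. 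The modest size of the improvement is consistent with a sharpening of bookkeeping, rather than a fundamentally new input.

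For $p \in \{5,7\}$ I would abandon the Katz framework in favor of the Loeffler orthonormal basis built from the Hauptmodul $f_p = \left(\eta(pz)/\eta(z)\right)^{24/(p-1)}$ on $X_0(p)$ (with exponent $6$ and $4$, respectively). By \cite[Corollary 2.2]{loeffler}, for $v_p(c) = \frac{12}{p-1}v_p(r)$ the powers $(cf_p)^n$ form an orthonormal basis of the $r$-overconvergent modular functions of tame level $1$. Hence the claim is equivalent to the estimate $v_p(a_i) \ge \frac{12}{p(p+1)} \, i$ for all $i\ge 0$ in the expansion $\Es_k/V(\Es_k) = \sum_i a_i f_p^i$. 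I would first produce an $f_p$-expansion of $V(\Es_k)/\Es_k$ from the constructions of \cite{kr_eisenstein}, using the sharp rate $\frac{1}{p+1}$ as input, and then invert the resulting formal power series in $f_p$, tracking the valuations of the coefficients through the geometric-series inversion.

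The main obstacle is exactly this inversion step: one must ensure that passing to the reciprocal does not degrade the rate by the $\frac{2}{3}$ factor that the general argument accepts. The special feature of $p \in \{5,7\}$ is that $p \nmid \frac{24}{p-1}$, so that the leading $f_p$-coefficient of $V(\Es_k)/\Es_k$ enjoys sharper $p$-integrality than is available for general $p\ge 5$. I expect the bulk of the work to consist in extracting this leading-coefficient integrality statement explicitly, from which an induction on the $f_p$-expansion index propagates the improved rate to all coefficients $a_i$. Both primes should then be treated in parallel, the only per-prime input being the numerical value of $\frac{24}{p-1}$ entering the Loeffler normalization.
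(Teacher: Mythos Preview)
The proposal has a genuine gap. The special feature you identify---that $p \nmid \frac{24}{p-1}$---holds for $p=13$ as well (where the exponent is $2$), so it cannot be the mechanism singling out $p \in \{5,7\}$. More fundamentally, you propose to invert $V(\Es_k)/\Es_k$ directly in the $f_p$-expansion and track valuations, but you give no concrete reason why the loss in this inversion should shrink from the factor $\frac{3}{2}$ to the factor $\frac{p}{p-1}$. The obstruction to naive inversion is not a ``leading-coefficient'' issue: it is that $V(\Es_k)/\Es_k$ lies only in $\frac{1}{p} M_0(\Z_p, \ge \frac{1}{p+1})$, and the prefactor $\frac{1}{p}$ causes powers of the non-constant part to accumulate negative valuation. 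Nothing in your outline addresses this, and an induction on the $f_p$-index of the type you describe will simply reproduce the general $\frac{2}{3}$ loss.

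The paper's argument proceeds along quite different lines and never inverts $V(\Es_k)/\Es_k$ directly. It factors
\[
\frac{\Es_k}{V(\Es_k)} = \frac{\es_n}{V(\es_n)} \cdot \left(\frac{E_{p-1}}{V(E_{p-1})}\right)^n, \qquad \es_n := \frac{\Es_{n(p-1)}}{E_{p-1}^n},
\]
handles the second factor by the Coleman--Wan theorem, and reduces everything to showing that $\es_n \in M_0(\Z_p, \ge \frac{p-1}{p+1})$ and is a $1$-unit there (so that $\es_n$ and $V(\es_n)$ can be inverted with no loss; the factor $\frac{1}{p}$ in the final rate comes solely from the effect of $V$). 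This improved rate for $\es_n$ is obtained by a genuinely ad hoc, prime-by-prime verification in the Katz expansion: the general bound $\es_n \in \frac{1}{p} M_0(\Z_p, \ge \frac{p}{p+1})$ from \cite{kr_eisenstein} handles all $i \ge p+1$; the congruence $\es_n \equiv 1 \pmod{p^2}$ handles the smallest $i$; and the finitely many remaining indices are disposed of using that certain complement modules $B_i$ vanish for dimension reasons (for $p=5$: $B_4=B_5=0$; for $p=7$: $B_3=B_5=B_7=0$) together with integrality rounding. These dimension coincidences are the actual special input for $p \in \{5,7\}$, and your proposal does not locate them.
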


The proof of Proposition \ref{prop:special_const_p=5,7} now consists of repeating the proof of part (b) of Theorem \ref{thm:main_B} by using Proposition \ref{prop:special_oc_p=5,7} as input.

The proof of Proposition \ref{prop:special_oc_p=5,7} runs along the same general lines of reasoning as were employed in \cite{kr_eisenstein}, see for instance the proof of \cite[Theorem B]{kr_eisenstein}.

The essential point is a consideration of the rate of overconvergence of the $p$-adic modular functions $\es_n := \frac{\Es_{n(p-1)}}{E_{p-1}^n}$ for $n\in \N$. For these we have the following that we will also formulate for the functions $e_n := \frac{E_{n(p-1)}}{E_{p-1}^n}$ as the proof is the same. By a $1$-unit in a ring $M_0(O,r)$ we mean an element of form $1+af$ where $f\in M_0(O,r)$ and $a \in O$ is a constant with $v_p(a)>0$. A $1$-unit is thus invertible in the ring $M_0(O,r)$.

\begin{prop}\label{prop:en*_p=5,7} Let $p\in \{5,7\}$. For $n\in\N$ we have
$$
e_n, \es_n \in M_0\left( \Z_p, \ge \frac{p-1}{p+1} \right) .
$$

As a consequence, $e_n,\es_n$ are $1$-units in $M_0(O,r)$ whenever $O$ is the ring of integers of any sufficiently large, finite extension $K/\Q_p$, and $r\in O$ satisfies $v_p(r) < \frac{p-1}{p+1}$.
\end{prop}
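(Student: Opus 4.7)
The plan is to follow the strategy of \cite[Theorem B]{kr_eisenstein} while exploiting two features that are special to $p \in \{5, 7\}$: first, the Hauptmodul $f_p = (\eta(pz)/\eta(z))^{24/(p-1)}$ of $X_0(p)$ has integer exponent $\frac{12}{p-1} \in \{3, 2\}$, so Loeffler's orthonormal basis \cite[Corollary 2.2]{loeffler} gives a particularly clean comparison between the rate of overconvergence and the valuations of coefficients in the $f_p$-expansion; second, for these primes there are no cusp forms of weight $p-1$ (equivalently $\dim M_{p-1}(\Z_p) = 1$), so $E_{p-1}$ is literally $E_4$ or $E_6$, with no ``extra'' forms to contend with below weight $12$.

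First I would dispose of the ``$1$-unit'' part of the statement as a formal consequence of the overconvergence bound. Using the standard congruence $E_{n(p-1)} \equiv E_{p-1}^n \pmod{p^2}$ recalled earlier in the paper, together with the relation between $\Es_{n(p-1)}$ and $E_{n(p-1)}$ coming from the Euler factor at $p$, one writes $e_n = 1 + p^2 g_n$ and $\es_n = 1 + p\, h_n$ with $g_n, h_n$ lying in $M_0(\Z_p, \ge \tfrac{p-1}{p+1})$ once the main overconvergence claim is in hand, and this immediately gives $1$-units in $M_0(O,r)$ whenever $v_p(r) < \frac{p-1}{p+1}$.

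For the main overconvergence bound, I would use the structure $M_*(\Z_p) = \Z_p[E_4, E_6]$ to write $e_n = E_{n(p-1)}/E_{p-1}^n$ explicitly as a polynomial of degree $\lfloor n(p-1)/12 \rfloor$ (with $\Z_p$-coefficients) in the weight-zero modular function $E_6^2/E_4^3$ for $p=5$, respectively $E_4^3/E_6^2$ for $p=7$. Invoking the explicit relation between this generator of the function field of $X(1)$ and the Hauptmodul $f_p$ of $X_0(p)$, I would re-expand $e_n$ as a power series in $f_p$, bound the valuations of its coefficients, and read off the rate of overconvergence via Loeffler's theorem, verifying that it is at least $\frac{p-1}{p+1}$. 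The same chain of manipulations applies to $\es_n$ after incorporating the $p$-adic unit $1/(1-p^{n(p-1)-1})$ normalizing the Eisenstein family.

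The main obstacle is extracting the factor $(p-1)$ improvement over the generic bound $\frac{1}{p+1}$ from \cite{kr_eisenstein}: the general argument there cannot produce it, so a genuinely sharper input is required. Here this input is provided by the combinatorial cleanness of the Victor Miller basis in weights divisible by $p-1$ at $p \in \{5,7\}$: because there are no cusp forms of weight $p-1$, the basis is governed purely by the interaction of $E_{p-1}$ with powers of $\Delta$, and the explicit description of $\Delta/E_{p-1}^{12/(p-1)}$ as a polynomial in $f_p$ of low degree (with controlled valuations) is what ultimately converts into the valuation estimate $v_p(b_i) \ge \frac{p-1}{p+1} \cdot i$ on the Katz coefficients of $e_n$ and $\es_n$.
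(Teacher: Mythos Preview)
Your approach is genuinely different from the paper's, and as written it has a real gap at the decisive step.

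The paper does \emph{not} expand $e_n,\es_n$ in the Hauptmodul $f_p$ at all. Its proof is a finite check on the Katz coefficients $b_i$: (i) \cite[Theorem~C]{kr_eisenstein} gives $v_p(b_i)\ge \frac{p}{p+1}\,i-1$, which already yields the target $\frac{p-1}{p+1}\,i$ once $i\ge p+1$; (ii) the congruence $\es_n\equiv 1\pmod{p^2}$ forces $v_p(b_i)\ge 2$, handling the smallest $i$; (iii) a dimension count shows that several of the intermediate modules $B_i(\Z_p)$ \emph{vanish} (for $p=5$: $B_4=B_5=0$; for $p=7$: $B_3=B_5=B_7=0$); (iv) for $p=7$, the two remaining cases $i=4,6$ are settled by rounding the bound from (i) up to the next integer. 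The $1$-unit statement is then obtained by dividing $\es_n-1$ not by $p^2$ but by an auxiliary $a\in O$ with $0<v_p(a)\le\frac12\bigl(\frac{p-1}{p+1}-v_p(r)\bigr)$, so that both integrality and the growth condition $v_p(b_i')-i\,v_p(r)\to\infty$ are visible.

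Your sketch, by contrast, writes $e_n$ as a $\Z_p$-polynomial in $E_6^2/E_4^3$ (or $E_4^3/E_6^2$) and then proposes to ``bound the valuations of its coefficients'' after passing to $f_p$. But the coefficients $\alpha_c$ of $e_n$ in this basis are, a priori, only $p$-adic integers with no further divisibility; that alone gives rate~$0$, not $\frac{p-1}{p+1}$, and nothing in your outline supplies the missing growth uniformly in~$n$. The feature you single out---``no cusp forms of weight $p-1$''---only says $\dim M_{p-1}=1$; what the paper actually exploits is the vanishing of specific $B_i$ in the range $1\le i\le p$, together with the already sharp input $\frac{p}{p+1}i-1$ from \cite{kr_eisenstein} and (for $p=7$) the integrality rounding. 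None of these ingredients appears in your plan, and without them there is no mechanism to upgrade the generic bound to $\frac{p-1}{p+1}$. Your treatment of the $1$-unit consequence is also too quick: from $e_n\equiv 1\pmod{p^2}$ and $e_n\in M_0(\Z_p,\ge\frac{p-1}{p+1})$ one cannot conclude $(e_n-1)/p^2\in M_0(\Z_p,\ge\frac{p-1}{p+1})$, since dividing by $p^2$ may drop the Katz-coefficient valuations below the threshold; the paper's choice of the divisor $a$ depending on $r$ is what makes the argument go through.
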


\begin{proof} The argument is the same for $e_n$ and $\es_n$, so let us just consider $\es_n$. For the first statement, considering the Katz expansion
$$
\es_n = 1 + \sum_{i=1}^{\infty} \frac{b_i}{E_{p-1}^i}
$$
of $\es_n$ where $b_i\in B_i(\Z_p)$ and the $B_i(\Z_p)$ as above, we must show that
$$
v_p(b_i) \ge \frac{p-1}{p+1} \cdot i
$$
for all $i$.

Since $\es_n \in \frac{1}{p} \cdot M_0(\Z_p, \ge \frac{p}{p+1} )$ by \cite[Theorem C]{kr_eisenstein} we have $v_p(b_i) \ge -1 + \frac{p}{p+1} \cdot i$ for all $i$. Thus, the desired inequality is seen to hold for $i\ge p+1$ as we then have $\frac{p-1}{p+1} \cdot i \le -1 + \frac{p}{p+1} \cdot i$.

Secondly, by \cite[Lemma 3.11]{kr_eisenstein} we have the congruence $\es_n \equiv 1 \pmod{p^2}$ of $q$-expansions, and by \cite[Proposition 2.5]{kr_eisenstein} this implies $v_p(b_i) \ge 2$ for all $i$. This again implies the desired when $p=5$ and $i=1,2,3$, and when $p=7$ and $i=1,2$.

To deal with the remaining cases, as we noted above in section \ref{subsec:proof_thmBa}, for any $p\ge 5$ the rank of the $\Z_p$-module $B_i(\Z_p)$ equals $d_{i(p-1)} - d_{(i-1)(p-1)}$ where $d_k$ denotes the dimension of the space of modular forms of weight $k$ on $\SL_2(\Z)$.

Consider then $p=5$. We then have $b_4=b_5=0$ because $d_{12}=d_{16}=d_{20}=2$ so that $B_4=B_5=0$. Thus, the desired inequality also holds for $i=4,5$ and hence for all $i$.

Consider then $p=7$. In this case we have $B_3=B_5=B_7=0$ because $d_{12}=d_{18}=2$, $d_{24}=d_{30}=3$, and $d_{36}=d_{42}=4$. Hence $b_3=b_5=b_7=0$, and we only need to verify the inequality for $i=4,6$. But we have $v_7(b_4) \ge \frac{7}{8} \cdot 4 - 1 = \frac{5}{2}$, and since $b_4\in \Z_7$ this implies $v_7(b_4)\ge 3 = \frac{6}{8} \cdot 4$. Similarly, $v_7(b_6) \ge \frac{7}{8} \cdot 6 - 1 = \frac{17}{4}$ whence $v_7(b_6) \ge 5 > \frac{6}{8} \cdot 6$. Thus, the desired inequality holds also for $i=4,6$ and so for all $i$.

Suppose now that $K/\Q_p$ is a finite extension, that $O$ is the ring of integers of $K$, and that $r\in O$ has $v_p(r) < \frac{p-1}{p+1}$. Assume that $K$ is large enough that there exists $a\in O$ with $0 < v_p(a) \le \frac{1}{2} \cdot (\frac{p-1}{p+1} - v_p(r))$. Defining $b_i' := a^{-1} b_i$ for $i\ge 1$ with the $b_i$ from the Katz expansion of $\es_n$ above, we then find
$$
v_p(b_i') - i v_p(r) \ge (i-\frac{1}{2})\cdot (\frac{p-1}{p+1} - v_p(r))
$$
which shows that $v_p(b_i') - i v_p(r) \ge 0$ for $i\ge 1$ as well as $v_p(b_i') - i v_p(r) \rightarrow \infty$ for $i\rightarrow \infty$. But then
$$
f := \sum_{i=1}^{\infty} \frac{b_i'}{E_{p-1}^i}
$$
defines an element of $M_0(O,r)$, and as $\es_n = 1 + a\cdot f$ with $v_p(a)>0$ we see that $\es_n$ is a $1$-unit in $M_0(O,r)$.
\end{proof}

\begin{proof}[Proof of Proposition \ref{prop:special_oc_p=5,7}] Let $p$ be $5$ or $7$, let $k\in\N$ be divisible by $p-1$, and put $n:=k/(p-1)$.

Suppose that $K/\Q_p$ is a finite extension, that $O$ is the ring of integers of $K$, and that $r\in O$ is such that $v_p(r^p) < \frac{p-1}{p+1}$. Suppose further that $K$ is large enough so that the second part of Proposition \ref{prop:en*_p=5,7} applies, i.e., so that $\es_n$ is a $1$-unit in $M_0(O,r^p)$. As the Frobenius operator maps $M_0(O,r^p)$ to $M_0(O,r)$, we can conclude that
$$
V(\es_n) = \frac{V(\Es_k)}{V(E_{p-1})^n}
$$
is a $1$-unit in $M_0(O,r)$. Now, as $v_p(r) < \frac{1}{p+1}$, the ``Coleman--Wan theorem'', \cite[Lemma 2.1]{wan}, tells us that the function $\frac{E_{p-1}}{V(E_{p-1})}$ is a $1$-unit in $M_0(O,r)$. In particular, we have then that
$$
\frac{\Es_k}{V(\Es_k)} = \frac{\es_n}{V(\es_n)} \cdot \left( \frac{E_{p-1}}{V(E_{p-1})} \right)^n \in M_0(O,r) .
$$

As we can choose a sequence of extensions $K/\Q_p$ such that the valuations $v_p(r)$ of the elements $r$ converge to $\frac{p-1}{p(p+1)}$ from below, the proposition follows rom the remarks at the end of section \ref{sec:prelim}.
\end{proof}

Numerical experimentation suggests that Proposition \ref{prop:en*_p=5,7} continues to hold for some primes $p>7$, perhaps for all, though we do not have an explanation at this point.

\subsection{The action of \texorpdfstring{$U$}{U} in weight \texorpdfstring{$\k$}{kappa}} The family $\frac{\Es_\k}{V(\Es_\k)}$ of functions occurs prominently in Coleman's seminal work \cite{coleman_banach} as a tool that enables one to relay the study of the $U$ operator in general weights back to weight $0$. For this to work, some information about the analytical properties of the family is necessary. In the papers \cite{buzzard_kilford_2-adic} and \cite{roe_3-adic} concerning the primes $2$ and $3$, respectively, very detailed information about the family was obtained and used to prove the ``halo'' conjecture in those cases. We will show here that our results permit us to generalize a certain aspect of the analysis of these papers. It would be possible to formulate this more generally for arbitrary primes $p\ge 5$, but for simplicity we will restrict ourselves to ``genus zero primes'', i.e., where $X_0(p)$ has genus zero.

These primes are $p=2,3,5,7,13$. For these primes, instead of the formal Katz expansion of $\frac{\Es_\k}{V(\Es_\k)}$ of Theorem \ref{thm:main_B} one can consider a formal expansion
$$
\frac{\Es_\k}{V(\Es_\k)} = \sum_{i,j\ge 0}^{\infty} a_{ij} w^j t^i
$$
where $w = w(\k)$, $\k \in \B \backslash \{ 1\}$, and where for $t$ we can take $t=f_p = \left( \frac{\eta(pz)}{\eta(z)} \right)^{\frac{24}{p-1}}$ the standard {\it Hauptmodul}, or, alternatively, for $p=2,3$ we can follow the papers \cite{buzzard_kilford_2-adic, roe_3-adic} and take for $t$ a certain uniformizer of $X_0(4)$ (when $p=2$) or $X_0(9)$ (when $p=3$.) In all cases, we will have the coefficients $a_{ij}$ in $\Z_p$ and the expansion has the advantage of being easy to compute for a given $\k$ because the $q$-expansion of $t$ will begin with $q$.

This formal expansion is a central object of study of the papers \cite{buzzard_kilford_2-adic, roe_3-adic} because it gives us information about the action of the $U$ operator on weight $\k$ overconvergent modular forms: for $0\le r < \frac{p}{p+1}$, by choosing $c\in O_{\C_p}$ to be of a specific, on $r$ dependent, absolute value, one has $V(\Es_{\k}) (ct)^i$, $i=0,1,2,\ldots $ as an orthonormal basis for the Banach space of $r$-overconvergent modular forms of weight $\k$. If we choose $t=f_p$, then according to \cite[Corollary 2.2]{loeffler} we should choose $c$ with $v_p(c) = \frac{12r}{p-1}$; for the other choices of $t$, see for instance the discussion on pp.\ 614--615 of \cite{buzzard_kilford_2-adic}. The action of $U$ on this basis can be described via the above expansion of $\frac{\Es_\k}{V(\Es_\k)}$: if we write $U(ct)^i = \sum_j m_{ij} (ct)^j$ (which is of course independent of $\k$) then the (infinite) matrix  giving the action of $U$ on the basis element $V(\Es_{\k}) (ct)^i$ is given by the product
$$
\frac{\Es_\k}{V(\Es_\k)} \cdot \sum_j m_{ij} \cdot V(\Es_{\k})(ct)^j
$$
as is seen by applying the identity $U(V(F)G) = FU(G)$ (``Coleman's trick''.)

A crucial part of the papers \cite{buzzard_kilford_2-adic} ($p=2$) and \cite{roe_3-adic} ($p=3$) consists in showing that when the factor $\frac{\Es_\k}{V(\Es_\k)}$ of the above matrix is properly ``rescaled'' as a function of $t$ then modulo the maximal ideal of $O_{\C_p}$ it becomes independent of $\k$ when $v_p(w(\k))$ is in a certain interval. Let us explain this in detail. Suppose that we have established a lower bound of the form
$$
v_p(a_{ij}) \ge \a i - \b j
$$
with certain positive constants $\a$ and $\b$. Write $w = w(\k)$ and define the power series $g_{\k}(x)$ such that
$$
\frac{\Es_\k}{V(\Es_\k)} = g_{\k}(w^{\gamma} t)
$$
where
$$
\gamma := \frac{\a}{\b} .
$$

We can then see that the coefficients of $g_{\k}$ are integral and the reduction $\bar{g}_{\k}$ of $g_{\k}$ modulo the maximal ideal of $O_{\C_p}$ is independent of $\k$ when $0<v_p(w(\k)) < \b$: writing $g_{\k}(x) = \sum_{n=0}^{\infty} c_n t^n$ we have
$$
c_n = \sum_j a_{nj} w^{j-\gamma n} .
$$

Assume then $0< v_p(w) < \b$. We can then see that each term $a_{nj} w^{j-\gamma n}$ is integral and in the maximal ideal when $j \neq \gamma n$: for $j\ge \gamma n$ this is clear as the $a_{nj}$ are integral. Suppose then that $j < \gamma n$. Then, using $\gamma \b = \a$, we have
$$
v_p(a_{nj} w^{j-n\gamma}) \ge \a n - \b j + (j-n\gamma) v_p(w) = (\gamma n -j)(\b - v_p(w)) > 0 .
$$

In the paper \cite{buzzard_kilford_2-adic} where $p=2$ the above lower bound for the valuations of the $a_{ij}$ was proved with $\a = \b = 3$, cf.\ \cite[Theorem 7]{buzzard_kilford_2-adic} (note that their $a_{ij}$ would be our $a_{ji}$.) Thus $\gamma = 1$, and they were able to conclude that $\bar{g}_{\k}$ is independent of $\k$ when $w=w(\k)$ satisfies $0< v_p(w) < 3$ as well as $\bar{c}_n = \bar{a}_{n,n}$. Similarly, for $p=3$ the paper \cite{roe_3-adic} established a lower bound with $\a = \b = 1$ with analogous conclusions for $\bar{g}_{\k}$.

For the primes $p=5,7,13$ we choose $t=f_p$ in the above, and then arguments completely similar to those that proved part (b) of Theorem \ref{thm:main_B} (working with expansions in $t$ rather than formal Katz expansions) will show that one has
$$
v_p(a_{ij}) \ge d_p i - j
$$
for all $i,j$ where
$$
d_p = \frac{12}{p-1} \cdot c_p
$$
with $c_p$ from Theorem \ref{thm:main_B}, or alternatively for $p=5,7$ from Proposition \ref{prop:special_const_p=5,7}. Here, the factor $\frac{12}{p-1}$ once again comes from \cite[Corollary 2.2]{loeffler}. We can then conclude that we have $\frac{\Es_\k}{V(\Es_\k)} = g_{\k}(w^{d_p} t)$ for a power series $g_{\k}$ with integral coefficients whose reduction $\bar{g}_{\k}$ is independent of $\k$ when $0<v_p(w(\k))<1$.

This statement is of course quite uninteresting unless the constant $d_p$ is optimal as otherwise the reduction $\bar{g}_{\k}$ will just be the constant $1$. However, numerical calculations, at this point mostly for $p=5$, strongly suggests the possibility of identifying the optimal constant and perhaps even the non-trivial reduction $\bar{g}_{\k}$. This will be reported on in detail elsewhere.

\end{document}